\documentclass[10pt]{scrartcl}

\usepackage{amsmath, amssymb, amsthm, marginnote, ltxcmds, adjustbox, stmaryrd, graphicx, bbold, stackengine}

\usepackage{mathtools, stackengine, changepage, ragged2e}

\usepackage{todonotes}

\definecolor{cite}{HTML}{11871E}
\definecolor{url}{HTML}{698996}
\definecolor{link}{HTML}{912F1B}

\usepackage[pdfencoding=unicode, colorlinks=true, linkcolor=link, citecolor=cite, urlcolor=url, linktocpage]{hyperref}

\usepackage[backend=biber, style=alphabetic, maxnames=10, maxalphanames=3, minalphanames=3]{biblatex}
\addbibresource{references.bib}

\usepackage{tikz}
\usetikzlibrary{cd}
\usetikzlibrary{arrows, arrows.meta, positioning, calc,decorations.pathmorphing}
\tikzcdset{arrow style=tikz, diagrams={>={Straight Barb[scale=0.8]}}}

\tikzstyle{arrow} = [-{Straight Barb[scale=0.8]}, line width=0.2mm]

\usepackage{enumitem}
{\end{enumerate}%
}

\usepackage[
	letterpaper,
	twoside=false,
	textheight=22cm,
	textwidth=14.4cm,
	marginparsep=0.75cm,
	marginparwidth=2.5cm,
	heightrounded,
	centering
]{geometry}


\sloppy 



\usepackage{stmaryrd}





\usepackage[capitalise]{cleveref}

\Crefname{proposition}{Proposition}{Propositions}
\Crefname{lemma}{Lemma}{Lemmas}
\Crefname{corollary}{Corollary}{Corollaries}
\Crefname{theorem}{Theorem}{Theorems}
\Crefname{alphThm}{Theorem}{Theorems}
\Crefname{alphCor}{Corollary}{Corollaries}

\Crefname{definition}{Definition}{Definitions}
\Crefname{notation}{Notation}{Notations}
\Crefname{construction}{Construction}{Constructions}
\Crefname{remark}{Remark}{Remarks}
\Crefname{observation}{Observation}{Observations}
\Crefname{trick}{Trick}{Tricks}
\Crefname{warning}{Warning}{Warnings}
\Crefname{conj}{Conjecture}{Conjectures}
\Crefname{assump}{Assumption}{Assumptions}
\Crefname{recollection}{Recollection}{Recollections}
\Crefname{terminology}{Terminology}{Terminologies}
\Crefname{convention}{Convention}{Conventions}

\Crefname{question}{Question}{Questions}
\Crefname{example}{Example}{Examples}

\Crefname{figure}{Figure}{Figures}

\crefformat{equation}{(#2#1#3)}
\crefformat{section}{\S#2#1#3}
\crefmultiformat{section}{\S\S#2#1#3}{and~#2#1#3}{, #2#1#3}{, and~#2#1#3}

\newtheorem{theorem}[subsection]{Theorem}
\newtheorem{proposition}[subsection]{Proposition}
\newtheorem{lemma}[subsection]{Lemma}
\newtheorem*{lem*}{Lemma}

\newtheorem{alphThm}{Theorem}

\newcommand{\neutralize}[1]{\expandafter\let\csname c@#1\endcsname\count@}
\makeatother

\newtheorem{alphCor}{Corollary}

\makeatletter

\makeatother

\theoremstyle{definition}
\newtheorem{definition}[subsection]{Definition}
\newtheorem{construction}[subsection]{Construction}
\newtheorem{notation}[subsection]{Notation}

\newtheorem{recollection}[subsection]{Recollections}

\newtheorem{setting}[subsection]{Setting}
\newtheorem{convention}[subsection]{Convention}
\newtheorem{condition}[subsection]{Condition}

\theoremstyle{remark}
\newtheorem{remark}[subsection]{Remark}
\newtheorem{observation}[subsection]{Observation}
\newtheorem{example}[subsection]{Example}


\newcommand{\inerts}{^{\mathrm{int}}}
\newcommand{\elementary}{^{\mathrm{el}}}
\newcommand{\map}{\mathrm{Map}}
\newcommand{\calg}{\mathrm{CAlg}}

\newcommand{\spectra}{\mathrm{Sp}}
\newcommand{\cmonoid}{\mathrm{CMon}}
\newcommand{\monoid}{\mathrm{Mon}}
\newcommand{\cgroup}{\mathrm{CGrp}}
\newcommand{\spc}{\mathcal{S}}
\newcommand{\res}{\mathrm{Res}}
\newcommand{\sphere}{\mathbb{S}}
\newcommand{\func}{\mathrm{Fun}}
\newcommand{\cat}{\mathrm{Cat}}
\newcommand{\sC}{\mathcal{C}}
\newcommand{\D}{\mathcal{D}}
\newcommand{\mackey}{\mathrm{Mack}}
\newcommand{\orbit}{\mathcal{O}}
\newcommand{\loops}{\Omega^{\infty}}
\newcommand{\cofree}{\mathrm{Cofree}}
\newcommand{\op}{^{\mathrm{op}}}
\newcommand{\finite}{\mathrm{Fin}}
\newcommand{\constant}{\mathrm{const}}
\newcommand{\eval}{\mathrm{ev}}
\newcommand{\forget}{\mathrm{fgt}}
\newcommand{\norm}{\mathrm{N}}

\newcommand{\id}{\mathrm{id}}
\newcommand{\ttimes}{^{\underline{\times}}}
\newcommand{\proper}{\mathcal{P}}

\newcommand{\arrdisp}{0.33ex}
\newcommand{\arrdisplacementsp}{0.72ex}

\newcommand{\ardis}{\ar@<\arrdisp>}
\newcommand{\ardissp}{\ar@<\arrdisplacementsp>}

\usepackage{contour}
\usepackage{ulem}

\contourlength{0.5pt}

\newcommand{\myuline}[1]{%
  \uline{\phantom{#1}}%
  \llap{\contour{white}{#1}}%
}

\makeatletter
\newcommand*{\saved@myuline}{}
\let\saved@myuline\myuline

\newcommand*{\mathuline}{%
  \mathpalette{\math@myuline\saved@myuline}%
}
\newcommand*{\math@myuline}[3]{%
  \mbox{#1{$#2#3\m@th$}}%
}

\renewcommand*{\myuline}{%
  \relax  
  \ifmmode
    \expandafter\mathuline
  \else
    \expandafter\saved@myuline
  \fi
}
\makeatother








\title{\LARGE An equivariant generalisation of McDuff--Segal's group--completion theorem}
\date{November 22, 2023\thanks{Date of publication in \href{https://academic.oup.com/imrn/advance-article/doi/10.1093/imrn/rnad278/7442065?login=false&utm_source=advanceaccess&utm_campaign=imrn&utm_medium=email}{IMRN}.}}
\author{\textsc{Kaif Hilman}\thanks{Max Planck Institute for Mathematics, Bonn\\kaif@mpim-bonn.mpg.de}}

\thispagestyle{empty}

\begin{document}
\maketitle

\begin{abstract}
In this short note, we prove a $G$--equivariant generalisation of McDuff--Segal's group--completion theorem for finite groups $G$. A new complication regarding genuine equivariant localisations arises and we resolve this by isolating a simple condition on the homotopy groups of $\mathbb{E}_{\infty}$--rings in $G$--spectra. We check that this condition is satisfied when our inputs are a suitable variant of $\mathbb{E}_{\infty}$--monoids in $G$--spaces via the existence of multiplicative norm structures, thus giving a localisation formula for their associated $G$--spherical group rings. 
\end{abstract}

\tableofcontents

\section{Introduction}
Group--completion is an important procedure in higher algebra for at least two reasons: (1) it is the main ingredient in constructing the K--theory of symmetric monoidal ($\infty-$)categories; (2) it allows one to port spectral methods to study questions regarding moduli spaces. The homotopy types of these group--completions are however mysterious in general, and the group--completion theorem of McDuff--Segal \cite{mcDuffSegal,randalWilliams} is a classical tool giving a homological formula for these objects. By now, the theorem has become a standard component, for example, in the active area burgeoning in the wake of the Madsen--Weiss theorem (cf. \cite[\S7.4]{galatiusRWActa} and \cite[\S7]{galatiusRWAnnals}) relating the homology of diffeomorphism groups to something amenable to stable homotopy theoretic methods. Very roughly speaking, the strategy is first to show that the group--completion of a geometrically defined cobordism category associated to the diffeomorphism groups is equivalent to a particular Thom spectrum. One then combines this identification with the group--completion theorem to compute, up to stabilisation, the homology of said diffeomorphism groups in terms of the homology of the Thom spectrum. 

In this article, we investigate a $G$--equivariant generalisation of this classical result for finite groups $G$. This is not as contrived a question as it may first seem since one of the main steps for an equivariant generalisation of the ``Madsen--Weiss program'' above has already been explored in \cite[Thm. 1.1]{galatiusSzucs} where they identified the group--completion of a certain equivariant cobordism category with an equivariant Madsen--Tillmann spectrum. Our hope is that the result we present here could provide one of the standard pieces in a future equivariant story and serve as a useful tool for making Bredon homological analyses of equivariant group--completions. 

\begin{convention}
    In this paper, by a category we will always mean an $\infty$--category in the sense of \cite{lurieHA}. When emphasising that something is a category in the classical sense, we will term it as a \textit{1--category}.
\end{convention}

\begin{notation}\label{nota:introSecondNotation}
    We will briefly introduce some notions so as to be able to state the main theorem. More details on all these can be found in \cref{section:Foundations}. We write $\orbit_G$ for the orbit category of the finite group $G$ and $\spc_G\coloneqq \func(\orbit_G\op,\spc)$ for the category of \textit{genuine $G$--spaces}, and write $\cmonoid(\spc_G)\simeq \func(\orbit_G\op,\cmonoid(\spc))$ for the category of $\mathbb{E}_{\infty}$--monoid objects therein. An object $M\in \cmonoid(\spc_G)$ consists of $\mathbb{E}_{\infty}$--monoid spaces $M^H$ for every subgroup $H\leq G$ and the restriction map $M^H\rightarrow M^K$ associated to a subconjugation $K\leq H$ is a map of $\mathbb{E}_{\infty}$--monoids. 
    
    There is a variant with more equivariant structure, namely the category $\cmonoid_G(\underline{\spc}_G)$ of \textit{$G$--$\mathbb{E}_{\infty}$--monoids in genuine $G$--spaces}. An object $M\in\cmonoid_G(\underline{\spc}_G)$ consists of the data above together with ``equivariant addition'' maps $\oplus_{H/K} \colon M^K\rightarrow M^H$ for every $K\leq H$ satisfying double--coset formulas and higher coherences. This turns out, as we shall recall at the end of \cref{cons:equivariantGroupCompletion}, to be equivalent to the category $\mackey_G(\spc)\coloneqq \func^{\times}(A^{\mathrm{eff}}(G),\spc)$ of $G$--Mackey functors valued in spaces defined as product--preserving presheaves on Barwick's effective Burnside category (cf. \cite[Rmk. 2.3]{CMMN2}). There is a forgetful functor $\forget \colon \cmonoid_G(\underline{\spc}_G) \rightarrow \cmonoid(\spc_G)$ forgetting the equivariant addition maps. 

\end{notation}

While we reserve the more general - but notationally heavier - statement of the main result \cref{mainThmA} in the body of the paper, we can however extract the following simple consequence on Bredon homology here (whose proof is given at the end of \cref{section:mainTheorem} after the proof of \cref{mainThmA}): 

\begin{theorem}\label{thm:Bredon}
    Let $M\in\cmonoid_G(\underline{\spc}_G)$ and $\underline{N}$ a $G$--Mackey functor valued in abelian groups. For any $K\leq G$, we have a natural isomorphism of $RO(K)$--graded Bredon homology with $\underline{N}$ coefficients
    \[H_{\star}^K(\Omega BM; \underline{N}) \cong H_{\star}^K(M; \underline{N})[(\pi_0M^K)^{-1}]\]
\end{theorem}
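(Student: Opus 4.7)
The strategy is to deduce the statement from the main result \cref{mainThmA} via the standard reduction of Bredon homology to equivariant stable homotopy. For any $G$--space $X$, a subgroup $K\leq G$, and a $G$--Mackey functor $\underline N$, the $RO(K)$--graded Bredon homology is computed by
\[H^K_\star(X;\underline N) \;\cong\; \pi^K_\star\bigl(H\underline N\wedge \Sigma^{\infty}_{+}X\bigr),\]
where $H\underline N$ is the genuine Eilenberg--MacLane $G$--spectrum associated to $\underline N$. Both sides of the claim are therefore obtained by applying $\pi^K_\star(H\underline N\wedge -)$ to the $G$--spherical group rings $\sphere_G[\Omega BM]\coloneqq \Sigma^\infty_+\Omega BM$ and $\sphere_G[M]\coloneqq \Sigma^\infty_+M$ respectively.

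Because $M\in\cmonoid_G(\underline{\spc}_G)$ carries a $G$--$\mathbb E_\infty$--structure, its spherical group ring $\sphere_G[M]$ is an $\mathbb E_\infty$--algebra in $G$--spectra equipped with the multiplicative norms required as input to \cref{mainThmA}: as indicated in the abstract, the existence of norms is precisely what guarantees the homotopy--group hypothesis under which the genuine equivariant localisation formula holds. Applying \cref{mainThmA} therefore yields an equivalence
\[\sphere_G[\Omega BM] \;\simeq\; \sphere_G[M]\bigl[(\pi_0 M)^{-1}\bigr]\]
in $G$--spectra, where the right hand side denotes the genuine equivariant localisation inverting the multiplicative structure on $\pi_0$ appropriately.

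Smashing this equivalence with $H\underline N$ preserves the localisation, since the latter is constructed as a filtered colimit along multiplication--by--element maps and smashing commutes with colimits. Taking $K$--fixed $RO(K)$--graded homotopy then converts the spectrum--level localisation into the algebraic Ore localisation of graded rings at the central commutative monoid $\pi_0 M^K$, yielding the isomorphism $H^K_\star(\Omega BM;\underline N) \cong H^K_\star(M;\underline N)[(\pi_0 M^K)^{-1}]$ asserted in the theorem.

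The principal obstacle I anticipate is the last bookkeeping step: one must verify that the equivariant localisation produced by \cref{mainThmA} specialises on $K$--fixed $RO(K)$--graded homotopy to the algebraic localisation at the bare multiplicative monoid $\pi_0 M^K$, and not at some larger Mackey--functorial multiplicative structure. This should follow from compatibility of the genuine norms with restriction to $K\leq G$, together with flatness of central multiplicative localisations of graded rings.
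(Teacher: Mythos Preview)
Your approach is essentially the same as the paper's: reduce Bredon homology to $\pi^K_\star(H\underline{N}\otimes\sphere_G[-])$, invoke \cref{mainThmA}(ii) to identify $\sphere_G[\Omega BM]$ with the telescopic localisation $(\pi_M^G)^{-1}\sphere_G[M]$, and then pass the localisation through the smash product and $\pi^K_\star$. The bookkeeping step you flag is resolved in the paper precisely by the second part of \cref{lem:TorsionExtensionLocalisation}, which says $\res^G_K(\pi_M^G)^{-1}\sphere_G[M]\simeq (\pi_M^K)^{-1}\res^G_K\sphere_G[M]$ under the torsion--extension condition guaranteed by the norms; this is exactly the ``compatibility of genuine norms with restriction'' you anticipated.
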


The reader might now justifiably wonder how common $G$--$\mathbb{E}_{\infty}$--monoid $G$--spaces actually are. To address this point somewhat, we will recall a standard mechanism to produce plenty of interesting examples in \cref{example:GMonoidSpaces}.

We now look ahead slightly to say a few words about what is actually proved in \cref{mainThmA} and the methods involved. The general formulation is in terms of higher algebraic localisations of spherical monoid rings (following that of Nikolaus \cite{nikolaus}) and the result will be in two parts: in part (i), we show using a direct adaptation of the proof in \cite[Thm. 1]{nikolaus} that for $M\in\cmonoid(\spc_G)$, the $G$--suspension spectrum of its group completion is computed as an abstract localisation satisfying a universal property. The crux of the matter here is that, unlike the nonequivariant case where one can prove that the abstract localisation can always be identified with a \textit{telescopic} localisation as appears in \cref{thm:Bredon} (cf. for example \cite[App. A]{nikolaus} for the proof of this in the general case of $\mathbb{E}_1$--rings satisfying the Ore condition), this is \textit{not} so in the equivariant setting. However, we do show in part (ii) of \cref{mainThmA} that when $M$ has the additional structure of a $G$--$\mathbb{E}_{\infty}$--monoid $G$--space, the associated $G$--spherical monoid ring attains the structure of the multiplicative norms (in the sense of \cite{greenleesMayMU, HHR}) which in turn ensures that the abstract and telescopic localisations agree. In fact, we will isolate a simple condition on the equivariant homotopy groups of $M$ we call \textit{torsion--extension} (cf. \cref{condition:torsionExtensions}) which ensures that the abstract and telescopic localisations agree even in the absence of the norms. This might be usable and useful in specific cases of $M$.

As far as we know, the theorem cannot be directly deduced from the classical group--completion theorem because the $G$--suspension spectrum of a $G$--space is \textit{not} given simply by taking the suspension spectrum on each genuine fixed points of the $G$--space. The first part of the theorem will require only standard $\infty$--category theory (essentially the same proof as \cite[Thm. 1]{nikolaus} as pointed out above), whereas in the more highly structured second part of \cref{mainThmA} we will need the language of \textit{$G$--categories} introduced in \cite{parametrisedIntroduction} in order to discuss $G$--$\mathbb{E}_{\infty}$  structures succintly. To our untrained eyes, the relevance of the multiplicative norms came as a bit of a surprise, but in hindsight, this result is likely known or at least expected among experts. While we were not able to find this result in the literature, we very much welcome a reference to where this result might have previously appeared and give the appropriate credits.

Lastly, a few words on organisation: we will briefly record some foundational materials in \cref{section:Foundations} to orient the reader who might not be familiar with the formalism of $G$--categories; in \cref{section:mainTheorem}, we give a proof of the main \cref{mainThmA}; and in \cref{section:finalRemarks} we will end the main body of the article with some remarks on how norms and localisations managed to interplay well in our situation and how this result fits in with the nonequivariant group--completion theorem. Along the way, we will explain how geometric fixed points turn the mysterious localisation $L_{\underline{S}^{-1}}R$ into something familiar. We also record a generic situation where this theorem might be useful and give a rich source of examples of $G$--$\mathbb{E}_{\infty}$--monoid $G$--spaces. Finally, in \cref{appendix}, we will prove a technical folklore result which we use in proving the main theorem, namely that $G$--$\mathbb{E}_{\infty}$ algebras in $G$--cartesian symmetric monoidal $G$--categories are the same as $G$--$\mathbb{E}_{\infty}$--monoids in said $G$--category. We have unfortunately not been able to find this in the literature and hope that this appendix will serve to fill in this gap. 

\subsection*{Acknowledgements}

We thank J.D. Quigley and Eva Belmont for posing the question as to what an ``equivariant group--completion theorem'' should be which  directly led us to writing this note. We also thank Maxime Ramzi for going through a draft and for helpful suggestions. Finally, we are also grateful to the anonymous referee whose patient comments led to a minor correction, many expositional improvements, as well as the writing of the appendix.

\section{Some preliminaries}\label{section:Foundations}
Let $G$ be a finite group.
\begin{notation}
    Let $\orbit_G$ be the orbit category of the finite group $G$: this is a 1--category whose objects are transitive $G$--sets and morphisms are $G$--equivariant maps. We write $\spc_G$ for the category of genuine $G$--spaces, which is defined to be $\spc_G\coloneqq \func(\orbit_G\op,\spc)$ where $\spc$ is the category of spaces, and we write $\spectra_G$ for the category of genuine $G$--spectra, a model of which is given by $G$--Mackey functors valued in spectra (cf. \cite{barwick1,barwick2}). We will also denote by
    $\sphere_G[-]$ for the functor ${\Sigma}^{\infty}_{+G}\colon \spc_G\rightarrow \spectra_G$ given by taking the $G$--suspension spectrum.
\end{notation}

\begin{notation}\label{nota:prelimSecondNotation}
    For a category $\sC$ admitting finite products, we write $\cmonoid(\sC)$ for the category of $\mathbb{E}_{\infty}$--monoids in $\sC$; for a symmetric monoidal category $\D^{\otimes}$, we write $\calg(\D^{\otimes})$ for the $\mathbb{E}_{\infty}$--algebra objects in $\D$ under the endowed tensor product structure. Writing $\sC^{\times}$ for the cartesian symmetric monoidal structure, we then have by \cite[Prop. 2.4.2.5]{lurieHA} that $\calg(\sC^{\times})\simeq \cmonoid(\sC)$. Note that this means $\calg(\spectra_G^{\otimes})$ denotes $\mathbb{E}_{\infty}$--rings in genuine $G$--spectra \textit{without} the multiplicative norms.
\end{notation}

We begin with the following observation, which requires no theory of $G$--categories:

\begin{observation}\label{obs:nonequivariantAdjunction}
    It is a standard fact that the left adjoint in the adjunction
    \begin{center}
        \begin{tikzcd}
            \spc_G \rar[shift left = 1, "\sphere_G{[}-{]}"] & \spectra_G\lar[shift left = 1, "\loops_G"] 
        \end{tikzcd}
    \end{center}
    refines to a symmetric monoidal functor with the cartesian symmetric monoidal structure on $\spc_G$ and the tensor product of $G$--spectra on $\spectra_G$ (cf. for example \cite[\S 5.2]{mathewNaumannNoelI} for the case of pointed $G$--spaces, which can then be precomposed with the symmetric monoidal functor of adding a disjoint basepoint $(-)_+\colon \spc_G\rightarrow\spc_{G*}$). Thus by \cite[Cor. 7.3.2.7]{lurieHA} the right adjoint $\loops_G$ automatically refines to a lax symmetric monoidal functor and hence, by \cite[Rmk. 7.3.2.13]{lurieHA}, applying the functor $\calg(-)$ yields an adjunction
    \begin{equation}\label{mainAdjunction}
        \begin{tikzcd}
            \cmonoid(\spc_G)\simeq \calg(\spc_G^{\times}) \rar[shift left = 1, "\sphere_G{[}-{]}"] & \calg(\spectra_G^{\otimes})\lar[shift left = 1, "\loops_G"] 
        \end{tikzcd}
    \end{equation}
\end{observation}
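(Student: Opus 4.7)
The plan is to assemble three standard inputs from higher algebra in the order already suggested by the statement itself, with the main content lying in the first step; everything after that is formal.

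First, I would establish that $\sphere_G[-]$ upgrades to a strong symmetric monoidal functor from $\spc_G^{\times}$ to $\spectra_G^{\otimes}$ by factoring it through pointed $G$--spaces. The cited \cite[\S 5.2]{mathewNaumannNoelI} produces the strong symmetric monoidal refinement of $\Sigma^{\infty}_{G}\colon \spc_{G*}\to\spectra_G$ where both sides carry the smash product. Precomposing with the free-basepoint functor $(-)_+\colon \spc_G\to\spc_{G*}$ would give what is needed, provided one knows $(-)_+$ is itself symmetric monoidal from the cartesian structure on $\spc_G$ to the smash product structure on $\spc_{G*}$. This last point is standard and comes from the natural identification $(X\times Y)_+\simeq X_+\wedge Y_+$, extended coherently as a map of symmetric monoidal $\infty$--categories.

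Second, I would invoke \cite[Cor. 7.3.2.7]{lurieHA}, which asserts that whenever a left adjoint between symmetric monoidal $\infty$--categories has been promoted to a strong symmetric monoidal functor, the right adjoint automatically inherits a canonical lax symmetric monoidal structure, assembled coherently from the unit and counit. Applying this to the adjunction at hand produces the lax symmetric monoidal structure on $\loops_G$.

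Finally, $\calg(-)$ is functorial with respect to lax symmetric monoidal functors, and by \cite[Rmk. 7.3.2.13]{lurieHA} it carries the resulting symmetric monoidal adjunction to an adjunction between commutative algebra objects. Combined with the identification $\calg(\sC^{\times})\simeq \cmonoid(\sC)$ from \cite[Prop. 2.4.2.5]{lurieHA}, this produces the desired adjunction \eqref{mainAdjunction}. The only place a subtlety could enter is the verification that the classical point-set comparison $(X\times Y)_+\simeq X_+\wedge Y_+$ really does upgrade to symmetric monoidality at the $\infty$--categorical level; but this is folklore and once in hand, everything else is a direct appeal to \cite{lurieHA}.
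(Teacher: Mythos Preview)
Your proposal is correct and follows exactly the same route as the paper: the observation itself is its own proof, and you have reproduced precisely its three steps---factoring $\sphere_G[-]$ through $(-)_+$ and the symmetric monoidal $\Sigma^{\infty}_G$ of \cite[\S 5.2]{mathewNaumannNoelI}, invoking \cite[Cor. 7.3.2.7]{lurieHA} for the lax structure on $\loops_G$, and then applying \cite[Rmk. 7.3.2.13]{lurieHA} together with $\calg(\sC^{\times})\simeq\cmonoid(\sC)$. The only addition you make is spelling out why $(-)_+$ is symmetric monoidal, which the paper leaves implicit.
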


Now, to set the stage for our discussions about the multiplicative norms, we collect here some basics on $G$--categories. The reader uninsterested in this refinement can skip right away to the proof of the first part of \cref{mainThmA} in the next section.

\begin{setting}[The theory of $G$--categories]\label{setting:GCategories}
    In keeping with the tradition of papers about group--completions, we aim to keep this article as compact as possible. As such, we have chosen to travel light in this document and we refrain from giving a self--contained exposition of the required theory on $G$--categories. For the original sources of these materials, we refer the reader to \cite{parametrisedIntroduction,nardinExposeIV, shahThesis}, and a one--stop survey of $G$--categories can be found for example in \cite[ Chap. 1]{kaifThesis}. In short, a $G$--category (resp. a $G$--functor) is an object (resp. morphism) in $\cat_{\infty,G}\coloneqq \func(\orbit_G\op,\cat_{\infty})$ and we will use the underline notation $\underline{\D}$ to denote a $G$--category and $\D_{H}$ for its value at $G/H\in\orbit_G\op$. For subgroups $K\leq H$ of $G$, we should think of the datum $\D_H\rightarrow \D_K$ packaged in the $G$--category $\underline{\D}$ as a ``restriction'' functor $\res^H_K$. In particular, by definition of morphisms in functor categories, a $G$--functor is always compatible with these ``restriction'' maps. Important examples of $G$--categories include genuine $G$--spaces $\{\underline{\spc}_G \colon G/H \mapsto \spc_H\}$ and genuine $G$--spectra $\{\myuline{\spectra}_G\colon G/H\mapsto \spectra_H\}$. Additionally, the functor $\orbit_H\simeq (\orbit_G)_{/(G/H)}\rightarrow (\orbit_G)_{/(G/G)}\simeq \orbit_G$ induces a functor $\cat_{\infty,G}\rightarrow \cat_{\infty,H}$ via restriction which we denote by $\res^G_H$. Using Lurie's notion of \textit{relative adjunctions} \cite[\S 7.3.2]{lurieHA}, one can define the notion of \textit{$G$--adjunctions} (cf. \cite[Def. 8.3]{shahThesis}): this roughly means a pair of $G$--functors $L\colon \underline{\sC}\rightleftharpoons\underline{\D} : R$ together with the data of adjunctions when evaluated at each $G/H\in\orbit_G\op$.
    
    Central to this theory is the notion of $G$--(co)limits, and among these the special cases of \textit{indexed (co)products} find a distinguished place. In this article, we will only need these special cases, and so we briefly explain them now. Intuitively, they should be thought of as taking (co)products with respect to finite $G$--sets so that for example, for $\underline{\sC}\in\cat_{\infty,G}$, $H\leq G$ and a $H$--equivariant object $X \in \sC_H$, $\prod_{G/H}X$ is now a $G$--equivariant object. We refer the reader to \cite[\S5]{shahThesis} for more details on this. When $\underline{\sC}$ is pointed (which just means that $\sC_K$ is pointed for every $K\leq G$ and all the restriction maps preserve the zero objects), one can construct a canonical comparison map $\coprod_{G/H}\rightarrow \prod_{G/H}$ (cf. \cite[Cons. 5.2]{nardinExposeIV}). If this map is an equivalence, then we say that $\underline{\sC}$ is \textit{$G$--semiadditive.} As in the nonequivariant case, for a $G$--category $\underline{\sC}$ with finite indexed products, we may construct (see for instance \cite[Def. 5.9]{nardinExposeIV}) the $G$--semiadditive $G$--category $\underline{\cmonoid}_G(\underline{\sC})$ of \textit{$G$--commutative monoids in $\underline{\sC}$} whose objects should roughly be thought of as objects $M\in\underline{\sC}$ equipped with ``equivariant addition maps'' $\prod_{G/H}\res^G_HM\rightarrow M$ for all $H\leq G$ on top of the usual addition maps $M\times M\rightarrow M$. Observe that this version of the equivariant addition maps recovers the one mentioned in \cref{nota:introSecondNotation} upon applying $(-)^G$ since $M^H\simeq (\prod_{G/H}\res^G_HM)^G \rightarrow M^G$.

Now, denote by $\underline{\finite}_{*}$ for the $G$--category of finite pointed $G$--sets. That is, it is the $G$--category  $\{G/H\mapsto \finite_{*H} \coloneqq \func(BH, \finite_*)\}$ where $BH$ is the groupoid with one object and morphism set given by the group $H$. Nardin used this to give a definition of $G$--symmetric monoidal categories in \cite[$\S3$]{nardinThesis} much like the nonequivariant situation from \cite{lurieHA}.  See also \cite[\S2]{nardinShah} for a comprehensive, more recent treatment and \cite[\S 5.1]{shahQuigley} for a summary of these matters. Suffice to say, in this setting, a $G$--symmetric monoidal category is a $G$--category $\underline{\D}^{\underline{\otimes}}$ equipped with a map to $\underline{\finite}_{*}$ satisfying appropriate cocartesianness and $G$--operadic conditions, and \textit{$G$--$\mathbb{E}_{\infty}$--ring objects} $\calg_G(\underline{\D}^{\underline{\otimes}})\coloneqq \func_{G/\underline{\finite}_*}\inerts(\underline{\finite}_{*}, \underline{\D}^{\underline{\otimes}})$ are then $G$--inert sections to this map (see also \cref{recollection:algebraCategories} for slightly more details to this). An object $R\in \calg_G(\myuline{\D}^{\underline{\otimes}})$ should be thought of as an object $R\in\calg(\D_G^{\otimes})$ equipped with $\mathbb{E}_{\infty}$--algebra maps $\bigotimes^G_H\res^G_HR\rightarrow R$ encoding ``equivariant multiplication''. In this notation, $\calg_G(\myuline{\spectra}_G^{\underline{\otimes}})$ will therefore mean those $\mathbb{E}_{\infty}$--rings in genuine $G$--spectra equipped with multiplicative norms, to be contrasted with objects in $\calg(\spectra_G^{\otimes})$ which do not have norms. Moreover, following \cite{HHR}, we use the notation $\norm^G_H$ instead of $\bigotimes^G_H$ in the special case of $\spectra_G$.

Analogously to \cref{nota:prelimSecondNotation}, denoting by $\underline{\sC}\ttimes$ the $G$--cartesian symmetric monoidal structure on a $G$--category $\underline{\sC}$ which admits finite indexed products, we also have that $\calg_G(\underline{\sC}^{\underline{\times}})\simeq\cmonoid_G(\underline{\sC})$. This is essentially because for $M\in \calg_G(\underline{\sC}^{\underline{\times}})$, the structure $\prod_{G/H}\res^G_HM= \bigotimes^G_H\res^G_HM\rightarrow M$ supplies precisely the ``equivariant addition'' structure to be an object in $\cmonoid_G(\underline{\sC})$. While this is a folklore result, we have not been able to find a proof of this in the literature and so we have indicated a proof in the appendix, see \cref{prop:CMon=CAlg}, where we also give more precise explanations and references for some of the matters discussed above.  
\end{setting}

\begin{lemma}\label{lem:basicAdjunctionCMonNalg}
    The $G$--adjunction $\sphere_G[-] \colon \underline{\spc}_G\rightleftharpoons \myuline{\spectra}_G : {\Omega}^{\infty}_G$ induces an adjunction $\sphere_G[-] \colon {\cmonoid}_G(\underline{\spc}_G) \rightleftarrows {\calg}_G(\myuline{\spectra}_G^{\underline{\otimes}}) : {\Omega}^{\infty}_G$.
\end{lemma}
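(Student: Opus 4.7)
The plan is to follow the same pattern as \cref{obs:nonequivariantAdjunction}, but in the parametrised setting of $G$-symmetric monoidal categories and $G$-adjunctions. There are essentially three ingredients to assemble.

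First, I would argue that the left adjoint $\sphere_G[-] \colon \underline{\spc}_G \to \myuline{\spectra}_G$ refines to a $G$-symmetric monoidal functor $\underline{\spc}_G\ttimes \to \myuline{\spectra}_G^{\underline{\otimes}}$. The nonequivariant $H$-level statement of this is the standard fact recalled in \cref{obs:nonequivariantAdjunction}; the point in the parametrised version is that these symmetric monoidal refinements are compatible with restrictions along subgroup inclusions and, crucially, intertwine the cartesian indexed products on the space side with Hill--Hopkins--Ravenel multiplicative norms on the spectrum side. This is by now a standard part of the foundations of genuine equivariant stable homotopy theory (the universal property of $\myuline{\spectra}_G^{\underline{\otimes}}$ as the presentably $G$-symmetric monoidal stabilisation of $\underline{\spc}_{G*}\ttimes$), and I would cite it directly rather than reprove it.

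Next, having $\sphere_G[-]$ as a $G$-symmetric monoidal left $G$-adjoint, one invokes the $G$-parametrised analogue of \cite[Cor. 7.3.2.7]{lurieHA} to conclude that the right $G$-adjoint $\Omega_G^\infty$ automatically refines to a lax $G$-symmetric monoidal $G$-functor. Applying the parametrised analogue of \cite[Rmk. 7.3.2.13]{lurieHA}, which says that the $G$-functor $\calg_G(-)$ carries lax $G$-symmetric monoidal $G$-adjunctions to $G$-adjunctions (and hence in particular on global sections to ordinary adjunctions), then produces the desired adjunction
\[
\sphere_G[-] \colon \calg_G(\underline{\spc}_G\ttimes) \;\rightleftarrows\; \calg_G(\myuline{\spectra}_G^{\underline{\otimes}}) \,:\, \Omega_G^\infty.
\]

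Finally, to identify the left-hand side with $\cmonoid_G(\underline{\spc}_G)$, I would invoke \cref{prop:CMon=CAlg} from the appendix, which establishes the folklore equivalence $\calg_G(\underline{\sC}\ttimes) \simeq \cmonoid_G(\underline{\sC})$ for any $G$-category $\underline{\sC}$ admitting finite $G$-indexed products. Composing with the adjunction above yields exactly the statement.

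The main obstacle, insofar as there is one, is locating and citing the $G$-parametrised analogues of Lurie's HA 7.3.2.7 and 7.3.2.13; once these (and the appendix's $\calg_G \simeq \cmonoid_G$) are in hand, the argument is entirely formal, mirroring \cref{obs:nonequivariantAdjunction} one categorical level up. The $G$-symmetric monoidality of $\sphere_G[-]$ itself is by now routine but is where the genuine equivariant structure (norms on the spectrum side matching indexed products on the space side) is really used.
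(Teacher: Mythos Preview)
Your proposal is correct and matches the paper's proof essentially step for step: the paper cites \cite[\S3]{nardinThesis} for the $G$--symmetric monoidality of $\sphere_G[-]$, then \cite[Lem.~1.3.11]{kaifThesis} for the parametrised analogue of \cite[Rmk.~7.3.2.13]{lurieHA} you allude to, and finally \cref{prop:CMon=CAlg} for the identification $\calg_G(\underline{\spc}_G\ttimes)\simeq\cmonoid_G(\underline{\spc}_G)$. Your diagnosis that the only real content is locating these parametrised references is exactly right.
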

\begin{proof}
    We know by \cite[$\S3$]{nardinThesis} that the map $\sphere_G[-]$ refines to a $G$--symmetric monoidal functor $\sphere_G[-] \colon \underline{\spc}_G^{\underline{\times}}\longrightarrow \myuline{\spectra}_G^{\underline{\otimes}}$.     This means that ${\Omega}^{\infty}_G$ canonically refines to a $G$--lax symmetric monoidal functor.  Hence using that ${\calg}_G(\underline{\spc}_G^{\underline{\times}})\simeq {\cmonoid}_G(\underline{\spc}_G)$ from \cref{prop:CMon=CAlg} and \cite[Lem. 1.3.11]{kaifThesis} that applying $\calg_G$ yields another adjunction analogously to \cite[Rmk. 7.3.2.13]{lurieHA}, we get the desired adjunction.
\end{proof}

\begin{construction}[Equivariant group--completions]\label{cons:equivariantGroupCompletion}\label{lem:groupCompletion}
    As explained for instance in \cite[\S 1]{GGN}, it makes sense to speak of  objects in arbitrary semiadditive  categories (or \textit{preadditive}, as it was termed in that paper) having the property of being \textit{group--complete} by requiring a certain canonically constructed shear map to be an equivalence. In the case of the semiadditive category $\cmonoid(\spc)$, we will write $\cgroup(\spc)$ for the full subcategory of group--complete objects. One characterisation for some $M\in\cmonoid(\spc)$ to lie in $\cgroup(\spc)$ is that the abelian monoid $\pi_0M$ has the property of being a group. 
    
    If we write $B$ for the suspension in the category $\cmonoid(\spc)$ (which is very different from the suspension on the underlying space!), then we know that we have the adjunction $\Omega B\colon \cmonoid(\spc)\rightleftharpoons \cgroup(\spc) : \mathrm{incl}$  so that $\Omega B$ implements the group--completion functor on $\cmonoid(\spc)$. In fact, as slickly explained in \cite[Prop. 3.3.5]{nineAuthorsII}, $\Omega B$ \textit{always} implements group--completions in \textit{any} semiadditive category with pullbacks and pushouts. We can then cofreely make this into a $G$--adjunction of $G$--categories
    \begin{equation}\label{eqn:cofreeCMonCGrpAdjunction}
        \begin{tikzcd}
        \underline{\cofree}_G\big(\cmonoid(\spc)\big) \rar[shift left = 1, "\Omega B"] & \underline{\cofree}_G\big(\cgroup(\spc)\big) \lar[shift left = 1, hook]
        \end{tikzcd}
    \end{equation}
    
    \noindent Here for a nonequivariant category $\sC$, $\underline{\cofree}_G(\sC)\in\cat_{\infty,G}$ is the $G$--category given by $\{G/H\mapsto \func(\orbit_H\op,\sC)\}_{H\leq G}$ (cf. \cite[Def. 2.7]{nardinExposeIV} for instance, where the notation was just an underline instead of the word ``cofree''). In this notation, the $G$--category of $G$--spaces is therefore given by $\underline{\spc}_G = \underline{\cofree}_G(\spc)$. Both $G$--categories in the adjunction are $G$--semiadditive, and so in particular $\Omega B$ preserves $G$--biproducts. Now if $\sC$ admits finite products, then writing $\mackey_G(-)\coloneqq \func^{\times}(A^{\mathrm{eff}}(G),-)$ for $G$--Mackey functors, we have
    \[\cmonoid_G(\underline{\cofree}_G(\sC))\simeq \mackey_G(\sC)\simeq \mackey_G(\cmonoid(\sC))\]
    where the first equivalence is by \cite[Thm. 6.5]{nardinExposeIV} and the second is well--known and can be deduced for example from \cite[Thm. II.19]{hebestreitWagner}, using that $A^{\mathrm{eff}}(G)$ is semiadditive by \cite[Prop. 4.3, Ex. B.3]{barwick1}. Using this, we can then apply ${\cmonoid}_G(-)$ to the adjunction \cref{eqn:cofreeCMonCGrpAdjunction} to get an adjunction
    \begin{center}
        \begin{tikzcd}
            {\cmonoid}_G(\underline{\spc}_G) \rar[shift left = 1, "\Omega B"] & {\cgroup}_G(\underline{\spc}_G)  \lar[shift left = 1, hook]
        \end{tikzcd}
    \end{center}
    Concretely, this implements group--completion pointwise, and this is what we mean by \textit{equivariant group--completion}.
\end{construction}

\begin{construction}[Forgetting norms]\label{cons:forgettingNorms}
    We explain here the compatibility of forgetting multiplicative norms with $G$--lax symmetric monoidal functors. First note that we have an adjunction $i \colon \ast \rightleftharpoons \orbit_G\op \: : p$ where $i$ is the inclusion of $G/G$, which is the initial object in $\orbit_G\op$. Hence, applying $\func(-,\cat_{\infty})$ we obtain an adjunction of $(\infty,2)$--categories $p^* \colon  \cat_{\infty}   \rightleftharpoons  \cat_{\infty, G} \: : i^*$.   Consequently, since $p^*(\sC) = \underline{\constant}_G(\sC) \coloneqq \sC\times \orbit_G\op$ and $i^*(\underline{\D}) = \D_{G} =\eval_{G/G}\underline{\D}$, we see that
    \begin{equation}\label{eqn:constantAdjunction}
        \func_G\big(\underline{\constant}_G(\sC), \underline{\D}\big) \simeq \func\big(\sC, {\D}_{G}\big)
    \end{equation}
    In particular, the fully faithful functor of 1--categories
    $\finite_* \rightarrow \finite_{*G}\coloneqq \func(BG,\finite_*)$ given by $n \mapsto \coprod^nG/G$ induces a $G$--functor $q\colon \underline{\constant}_G(\finite_*) \longrightarrow \underline{\finite}_{*}$ given by $(n, G/H) \mapsto \coprod^nH/H$. Therefore, for a $G$--symmetric monoidal category $\underline{\D}^{\underline{\otimes}}\in \cmonoid_G(\underline{\cat}_{\infty,G})$, we obtain the map 
    \[\quad\quad\calg_G(\underline{\D}^{\underline{\otimes}}) \coloneqq \func_{G/\underline{\finite}_*}\inerts(\underline{\finite}_{*}, \underline{\D}^{\underline{\otimes}}) \xlongrightarrow{q^*} \func\inerts_{/\finite_*}(\finite_*, \D_{G}^{\otimes}) =: \calg(\D_G^{\otimes})\]
    where, to analyse the target, we have  used that 
    \begin{equation*}
        \begin{split}
            \func_{G/\underline{\finite}_*}(\underline{\constant}_G(\finite_*) ,\underline{\D}^{\underline{\otimes}})&\coloneqq \func_G(\underline{\constant}_G(\finite_*),\underline{\D}^{\underline{\otimes}})\times_{\func_G(\underline{\constant}_G(\finite_*), \underline{\finite}_{*})} \{q\}\\
            &\simeq\func(\finite_*,\D_{G}^{\otimes})\times_{\func(\finite_*, {\finite}_{*G})} \{q\}\\
            &\simeq\func(\finite_*,\D_{G}^{\otimes})\times_{\func(\finite_*, {\finite}_{*})} \{q\} =: \func_{/\finite_*}(\finite_*, \D_{G}^{\otimes})
        \end{split}
    \end{equation*}
    where the second equivalence is by \cref{eqn:constantAdjunction} and the third since $q\in \func(\finite_*,\finite_{*G})$ lies in the full subcategory $\func(\finite_*,\finite_*)$ via the fully faithful functor $q\colon \finite_*\subseteq \finite_{*G}$. Intuitively, the functor $q^*$ forgets the norm structures on a $G$--$\mathbb{E}_{\infty}$--ring object in $\underline{\D}$ and so we will also denote it by $\forget$ in the sequel.

    All in all, as a consequence, for a $G$--lax symmetric monoidal functor $F\colon \underline{\sC}^{\underline{\otimes}}\rightarrow \underline{\D}^{\underline{\otimes}}$, since $F\colon \calg_G(\underline{\sC}^{\underline{\otimes}})\rightarrow \calg_G(\underline{\D}^{\underline{\otimes}})$ is given by postcomposition and the forgetful functor is given by precomposition along $q\colon \underline{\constant}_G(\finite_*) \rightarrow \underline{\finite}_{*}$, we obtain a commuting square
    \begin{center}
        \begin{tikzcd}
            \calg_G(\underline{\sC}^{\underline{\otimes}}) \rar["F"] \dar["\forget"]& \calg_G(\underline{\D}^{\underline{\otimes}})\dar["\forget"]\\
            \calg({\sC}_G^{{\otimes}})\rar["F"] & \calg({\D}_G^{{\otimes}})
        \end{tikzcd}
    \end{center}
\end{construction}

\section{The main theorem}\label{section:mainTheorem}

In order to state and prove the theorem, we will need a few more terminologies and observations.

\begin{notation}\label{nota:canonicalLocalisationComparsion}
    In this note, two kinds of ring localisations will feature and we define and relate them here. Let $R\in\calg(\spectra_G)$ and $\underline{S} = \{S_H\}_{H\leq G}$ be a $G$\textit{--subset} of the zeroth equivariant homotopy Mackey functor $\underline{\pi}_0R$ of $R$. That is, for any $H\leq G$, $\underline{S}$ satisfies $\res^G_HS_G\subseteq S_H\subseteq \pi_0^HR\coloneqq \pi_0R^H$. Now for any $A\in\calg(\spectra_G)$, we define 
    \[\map_{\calg(\spectra_G)}^{\underline{S}^{-1}}(R,A) \quad\quad\text{ and } \quad\quad \map_{\calg(\spectra_G)}^{S_G^{-1}}(R,A)\]
    to be subcomponents of $\map_{\calg(\spectra_G)}(R,A)$ of $\mathbb{E}_{\infty}$--algebra maps $R\rightarrow A$ which send elements in $\underline{S}$ to units in $\underline{\pi}_0A$ and send elements in $S_G$ to units in $\pi_0^GA$, respectively. By general theory (cf. \cite[Appen. A]{nikolaus} for example), we know that the latter mapping space is corepresented by a telescopic localisation $S_G^{-1}R$ of $R$ against elements in $S_G\subseteq \pi_0^GR$ (ie. $\map_{\calg(\spectra_G)}^{S_G^{-1}}(R,A)\simeq \map_{\calg(\spectra_G)}(S_G^{-1}R,A)$). In particular, we have that $\underline{\pi}_{\star}S_G^{-1}R\cong S_G^{-1}\underline{\pi}_{\star}R$.
    
    On the other hand, \textit{if} the former mapping space is corepresentable, then we will write the corepresenting object as $L_{\underline{S}^{-1}}R$. In general, this need not be given by a nice formula in terms of a telescopic localisation since we need to invert different sets of elements at different subgroups $H\leq G$ that do not all come from restricting elements from $S_G$ (ie. the inclusion $\res^G_HS_G\subseteq S_H$ might be proper), and so $\underline{\pi}_*L_{\underline{S}^{-1}}R$ need not admit a nice description as a Mackey functor with elements in $\underline{S}$ inverted. However, since maps $R\rightarrow A$ which invert $\underline{S}$ must necessarily invert $S_G$, we do have an inclusion
    \[\map_{\calg(\spectra_G)}^{\underline{S}^{-1}}(R,-) \xhookrightarrow{\phantom{\quad}} \map_{\calg(\spectra_G)}^{S_G^{-1}}(R,-)\]
    Thus, when $L_{\underline{S}^{-1}}R$ exists, this inclusion is induced by a canonical comparison map     in $\calg(\spectra_G)$
    \begin{equation}\label{eqn:canonicalComparsion}
        S_G^{-1}R \longrightarrow L_{\underline{S}^{-1}}R.
    \end{equation}
\end{notation}

\begin{notation}\label{nota:intro}
    Let $M\in\cmonoid(\spc_G)$. We write $\underline{\pi}_M\subseteq \underline{\pi}_0\sphere_G[M]$ for the image of the Hurewicz map on the equivariant homotopy groups $\underline{\pi}_0M \rightarrow\underline{\pi}_0{\Omega}^{\infty}_G\sphere_G[M] = \underline{\pi}_0\sphere_G[M]$ induced by the adjunction unit $\id\Rightarrow \loops_G\sphere_G$. This is clearly a $G$--subset in the sense defined above.

\end{notation}

We are now ready to state the main theorem of this note:

\begin{theorem}\label{mainThmA}
    Let $M\in\cmonoid(\spc_G)$ be an $\mathbb{E}_{\infty}$--monoid $G$--space.
    \begin{enumerate}
        \item[(i)] The object $L_{(\underline{\pi}_M)^{-1}}\sphere_G[M]$ exists and the group--completion map $M \rightarrow \Omega BM$ induces an equivalence in $\calg({\spectra}_G)$
    \[L_{(\underline{\pi}_M)^{-1}}\sphere_G[M] \xlongrightarrow{\simeq} \sphere_G[\Omega BM]\]
    \item[(ii)] Moreover, if $M$ additionally has the structure of a $G$--$\mathbb{E}_{\infty}$--monoid $G$--space - ie. $M\in\cmonoid_G({\spc}_G)$ - then $\sphere_G[\Omega BM]\simeq L_{(\underline{\pi}_M)^{-1}}\sphere_G[M] $ refines to a $G$--$\mathbb{E}_{\infty}$--ring object. In other words, it lifts to an object in $\calg_G({\spectra}_G)$. Furthermore, in this case, the canonical map from \cref{eqn:canonicalComparsion} 
    \[(\pi_M^G)^{-1}\sphere_G[M] \longrightarrow L_{(\underline{\pi}_M)^{-1}}\sphere_G[M]\simeq \sphere_G[\Omega BM] \]
    is an equivalence so that we have the expected localisation effect on homotopy groups, ie. $\underline{\pi}_{\star}\sphere_G[\Omega BM]\cong (\pi_M^G)^{-1}\underline{\pi}_{\star}\sphere_G[M]$.
    \end{enumerate}
\end{theorem}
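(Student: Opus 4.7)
Part (i). The plan is to adapt the proof of Nikolaus \cite[Thm. 1]{nikolaus} to the equivariant setting, showing that $\sphere_G[\Omega BM]$ corepresents the restricted mapping space $\map^{(\underline{\pi}_M)^{-1}}_{\calg(\spectra_G)}(\sphere_G[M], -)$. Using the adjunction from \cref{obs:nonequivariantAdjunction}, mapping out of $\sphere_G[\Omega BM]$ into any $A \in \calg(\spectra_G)$ translates to $\map_{\cmonoid(\spc_G)}(\Omega BM, \loops_G A)$. By the universal property of the equivariant group--completion of \cref{cons:equivariantGroupCompletion}, this identifies with the subspace of $\map_{\cmonoid(\spc_G)}(M, \loops_G A)$ consisting of maps factoring through the group--complete part of $\loops_G A$. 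Applying Nikolaus's pointwise observation at each orbit $G/H \in \orbit_G$, these are precisely the maps whose induced action on $\underline{\pi}_0$ sends $\underline{\pi}_M$ to units in $\underline{\pi}_0 A$, which after re--applying the adjunction matches the right--hand side of the claimed equivalence.

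Part (ii). The refinement $\sphere_G[\Omega BM] \in \calg_G(\myuline{\spectra}_G^{\underline{\otimes}})$ is immediate from the $G$--adjunction of \cref{lem:basicAdjunctionCMonNalg}, once we observe that $\Omega BM \in \cmonoid_G(\underline{\spc}_G)$ via the equivariant group--completion construction in \cref{cons:equivariantGroupCompletion}. For the canonical map from \cref{eqn:canonicalComparsion} being an equivalence, my plan is to construct an inverse by showing that $R \coloneqq (\pi_M^G)^{-1}\sphere_G[M]$ already inverts $\underline{\pi}_M$ in $\underline{\pi}_0$. Again by \cref{lem:basicAdjunctionCMonNalg}, $\sphere_G[M]$ is a $G$--$\mathbb{E}_{\infty}$--ring with multiplicative norms $\norm^G_H$, and the $G$--symmetric monoidality of $\sphere_G[-]$ transports the equivariant addition $\oplus_{G/H}\colon M^H \to M^G$ on $M$ to these norms on $\sphere_G[M]$, giving an inclusion $\norm^G_H(\pi_M^H) \subseteq \pi_M^G$. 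Since $\pi_0^H R = (\res^G_H \pi_M^G)^{-1}\pi_0^H \sphere_G[M]$, for any $s \in \pi_M^H$ the element $\res^G_H \norm^G_H(s)$ is a unit in $\pi_0^H R$. Expanding this product via the double coset formula in the commutative ring $\pi_0^H \sphere_G[M]$, the factor at the trivial double coset $[e] \in H \backslash G / H$ equals $s$ itself. Since any factor of a unit in a commutative ring is a unit, $s$ is a unit in $\pi_0^H R$. Hence $R$ inverts $\underline{\pi}_M$, and the universal property established in part (i) produces the desired inverse to \cref{eqn:canonicalComparsion}.

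The main obstacle I anticipate lies in rigorously establishing the inclusion $\norm^G_H(\pi_M^H) \subseteq \pi_M^G$ from the $G$--symmetric monoidality of $\sphere_G[-]$ combined with the $G$--$\mathbb{E}_{\infty}$ structure on $M$. While intuitively clear - the multiplicative norm on a $G$--spherical monoid ring should implement the equivariant addition on the underlying $G$--$\mathbb{E}_{\infty}$--monoid - the rigorous argument requires unpacking the folklore equivalence $\calg_G(\underline{\spc}_G^{\underline{\times}}) \simeq \cmonoid_G(\underline{\spc}_G)$ from \cref{prop:CMon=CAlg} and carefully tracing how this identification interacts with the $G$--symmetric monoidal structure on $\sphere_G[-]$.
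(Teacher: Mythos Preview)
Your proposal is correct and follows essentially the same strategy as the paper. Part (i) is argued identically: the paper also reduces via the adjunction \cref{mainAdjunction} to the universal property of $\Omega BM$ in $\cmonoid(\spc_G)$, proved by the same $X^{\times}$ pullback trick from \cite{nikolaus} (which works verbatim since $\cmonoid(\spc_G)\simeq\func(\orbit_G\op,\cmonoid(\spc))$ allows the pointwise argument you describe).

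For part (ii), your argument and the paper's are mathematically the same but organised differently. The paper isolates an intermediate notion, the \emph{torsion--extension condition} (\cref{condition:torsionExtensions}): a $G$--subset $\underline{S}\subseteq\underline{\pi}_0R$ satisfies it if for every $a\in S_H$ there exists $r\in\pi_0^HR$ with $r\cdot a\in\res^G_HS_G$. The paper then proves three separate lemmas: torsion--extension implies the canonical map \cref{eqn:canonicalComparsion} is an equivalence (\cref{lem:TorsionExtensionLocalisation}, via the same ``factor of a unit is a unit'' argument you use); norm--closure implies torsion--extension (\cref{lem:normClosureTorsionExtension}, via the double coset formula exactly as you write); and $\underline{\pi}_M$ is norm--closed (\cref{lem:normClosureMonoidRing}, which is precisely the obstacle you flag). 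Your direct approach fuses the first two steps: you show $(\pi_M^G)^{-1}\sphere_G[M]$ already inverts $\underline{\pi}_M$ and then invoke the universal property from part (i) to produce the inverse. The paper's factoring buys a bit of extra generality --- the torsion--extension condition makes sense and yields the conclusion even for $R\in\calg(\spectra_G)$ without norms, which the paper advertises as potentially useful in specific cases --- but for the theorem as stated your streamlined route is equally valid. Your identification of the norm--closure inclusion $\norm^G_H(\pi_M^H)\subseteq\pi_M^G$ as the main point requiring care matches the paper's \cref{lem:normClosureMonoidRing}, whose proof is the naturality square for the unit $\id\Rightarrow\Omega^\infty_G\sphere_G[-]$ applied to $\oplus_{G/H}$.
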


We now turn to the proof of the first part of the theorem. We emphasise again that the theory of $G$--categories is not required in this part.

\begin{proof}[Proof of  \cref{mainThmA} (i)]
    The proof is exactly the same as that of \cite[Thm. 1]{nikolaus}. To wit, we first claim that $\Omega BM\in \cmonoid(\spc_G)$ satisfies the following universal property: for every $X\in\cmonoid(\spc_G)$, the map
    \[{\map}_{{\cmonoid}(\spc_G)}(\Omega BM,X)\rightarrow {\map}_{{\cmonoid}(\spc_G)}^{(\underline{\pi}_0M)^{-1}}(M,X)\]
    induced by $\eta\colon M\rightarrow \Omega BM$
    is an equivalence, where ${\map}^{(\underline{\pi}_0M)^{-1}}_{\cmonoid(\spc_G)}\subseteq {\map}_{\cmonoid(\spc_G)}$ means the subcomponents of maps where $\underline{\pi}_0M$ is sent to elements that admit additive inverses in $\underline{\pi}_0X$. The map lands in this subcomponent since $\Omega BM$ is group--complete. To prove the claim, define $X^{\times}$ as the pullback in $\cmonoid(\spc_G)$
    \begin{center}
        \begin{tikzcd}
            X^{\times} \rar["i"]\dar\ar[dr, phantom, very near start, "\lrcorner"] & X\dar\\
            (\underline{\pi}_0X)^{\times} \rar[hook] & \underline{\pi}_0X
        \end{tikzcd}
    \end{center}
    so that $X^{\times}\in\cgroup(\spc_G)$. Now consider the commuting diagram
    \begin{center}
        \begin{tikzcd}
            {\map}_{{\cmonoid}(\spc_G)}(\Omega BM,X)\rar["\eta^*"] & {\map}_{{\cmonoid}(\spc_G)}^{(\underline{\pi}_0M)^{-1}}(M,X)\\
            {\map}_{{\cmonoid}(\spc_G)}(\Omega BM,X^{\times})\uar["i_*"] \rar["\eta^*"'] & {\map}_{\cmonoid(\spc_G)}(M,X^{\times})\uar["i_*"'] 
        \end{tikzcd}
    \end{center}
    The left vertical $i_*$ is an equivalence since $\Omega BM$ is group--complete and $(-)^{\times}$ is the right adjoint to the inclusion $\cgroup(\spc_G)\subseteq \cmonoid(\spc_G)$; the bottom $\eta^*$ is an equivalence since $X^{\times}$ is group--complete and $\Omega BM$ is the group--completion of $M$ by \cref{lem:groupCompletion}; the right vertical $i_*$ is an equivalence because maps in $\map^{(\underline{\pi}_0M)^{-1}}$ are precisely those that land in $X^{\times}$ by definition. Therefore, all in all, the top horizontal $\eta^*$ is also an equivalence, as claimed.

    Now by the adjunction \cref{mainAdjunction}, for any $A\in \calg({\spectra}_G)$, we have
    \begin{equation}\label{eqn:mainComputation}
        \begin{split}
            {\map}_{{\calg}({\spectra}_G)}(\sphere_G[\Omega BM], A)&\simeq {\map}_{{\cmonoid}(\spc_G)}(\Omega BM, {\Omega}^{\infty}_GA)\\
            &\simeq {\map}_{{\cmonoid}(\spc_G)}^{(\underline{\pi}_0M)^{-1}}(M, {\Omega}^{\infty}_GA)\\
            &\simeq {\map}_{{\calg}({\spectra}_G)}^{(\underline{\pi}_M)^{-1}}(\sphere_G[M], A)
        \end{split}
    \end{equation}
    where the second equivalence is by the claim above. By \cref{nota:canonicalLocalisationComparsion},  $\sphere_G[\Omega BM]$ therefore computes $L_{(\underline{\pi}_M)^{-1}}\sphere_G[M]$, as desired.
\end{proof}

We now turn to the task of refining to normed structures when the input is more highly structured, ie. when $M\in \cmonoid_G(\underline{\spc}_G)$. Before that, it would be useful to formulate the following intermediate notion together with a couple of easy consequences which would help us identify the homotopy groups of the abstract localisation we have so far.

\begin{condition}[Torsion--extensions]\label{condition:torsionExtensions}
    Let $R\in\calg(\spectra_G)$ and $\underline{S}\subseteq \underline{\pi}_0R$ be a $G$--subset of the zeroth equivariant homotopy groups of $R$. We say that $\underline{S}$ satisfies the \textit{torsion--extension condition} if 
    for any $H\leq G$, the inclusion $\res^G_HS_G\subseteq S_H$ is a torsion--extension, ie. for any $a\in S_H$, there exists a $r\in \pi_0^HR$ such that $r\cdot a\in\res^G_HS_G$.
\end{condition}

\begin{remark}
    The reason for this choice of terminology was an analogy in the case of modules: if $I\subseteq J\subseteq R$ are $R$--submodules satisfying the analogous condition, then $J/I$ is a torsion $R$--module. In any case, the next three lemmas should clarify our interest in this condition.
\end{remark}

\begin{lemma}\label{lem:TorsionExtensionLocalisation}
    If $R\in\calg({\spectra}_G)$ and $\underline{S}\subseteq \underline{\pi}_0R$ is a multiplicatively closed $G$--subset satisfying \cref{condition:torsionExtensions}, then $L_{\underline{S}^{-1}}R$ exists and  the canonical map $S_G^{-1}R \longrightarrow L_{\underline{S}^{-1}}R$ from \cref{eqn:canonicalComparsion}  is an equivalence. Furthermore, in this case, for any $K\leq G$, we have that $\res^G_KS^{-1}_GR\simeq S^{-1}_K\res^G_KR$.
\end{lemma}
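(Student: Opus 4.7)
The plan is to show that the telescopic localization $S_G^{-1}R$ already possesses the universal property defining $L_{\underline{S}^{-1}}R$, so that the canonical comparison map \cref{eqn:canonicalComparsion} is automatically an equivalence and $L_{\underline{S}^{-1}}R$ exists. The essential input is an elementary commutative-algebra observation: if $B$ is a commutative ring and $a, r \in B$ with $r \cdot a \in B^{\times}$, then $a \in B^{\times}$ with inverse $r \cdot (ra)^{-1}$.

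Concretely, I would argue as follows. Let $f\colon R \to A$ be any $\mathbb{E}_{\infty}$--map in $\calg(\spectra_G)$ that inverts $S_G$ on $\pi_0^G$. Given $H \leq G$ and $a \in S_H$, the torsion-extension condition supplies $r \in \pi_0^H R$ with $r \cdot a = \res^G_H(a')$ for some $a' \in S_G$. Since $f$ is a map of $\mathbb{E}_{\infty}$--rings in $G$--spectra, the induced map $f^H\colon \pi_0^H R \to \pi_0^H A$ is a ring homomorphism that commutes with restrictions; hence $f^H(ra) = \res^G_H(f^G(a'))$, and this is a unit because $f^G(a')$ is a unit by hypothesis and $\res^G_H$ is a ring map. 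The observation above then forces $f^H(a)$ to be a unit. Thus every map inverting $S_G$ automatically inverts all of $\underline{S}$, giving $\map^{\underline{S}^{-1}}_{\calg(\spectra_G)}(R,-) = \map^{S_G^{-1}}_{\calg(\spectra_G)}(R,-)$; corepresentability of the left-hand side and the claimed equivalence follow at once.

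For the restriction formula, I would use that $\res^G_K\colon \spectra_G \to \spectra_K$ is a colimit-preserving symmetric monoidal functor, hence commutes with the sequential colimit computing telescopic localization, giving an equivalence of $K$--$\mathbb{E}_{\infty}$--rings $\res^G_K(S_G^{-1}R) \simeq (\res^G_K S_G)^{-1}\res^G_K R$. Applying the universal-property argument of the previous paragraph but now to the $K$--equivariant ring $\res^G_K R$, with the inclusion $\res^G_K S_G \subseteq S_K$ (a torsion-extension by hypothesis), identifies $(\res^G_K S_G)^{-1}\res^G_K R$ with $S_K^{-1}\res^G_K R$, yielding the desired formula.

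The argument is essentially formal once the universal property of telescopic localization (from Nikolaus's appendix, already cited in \cref{nota:canonicalLocalisationComparsion}) is in hand; the only point requiring care is that restriction maps between genuine fixed-point rings $\pi_0^G \to \pi_0^H$ are ring homomorphisms, which is precisely what the $\mathbb{E}_{\infty}$-ring structure on $R$ buys us.
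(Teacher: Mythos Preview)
Your proposal is correct and follows essentially the same approach as the paper: both show that any $\mathbb{E}_{\infty}$--map inverting $S_G$ automatically inverts all of $\underline{S}$ via the elementary observation that if $r\cdot a$ maps to a unit then so does $a$, and both deduce the restriction formula by first identifying $\res^G_K S_G^{-1}R \simeq (\res^G_K S_G)^{-1}\res^G_K R$ and then reapplying the same torsion-extension argument at level $K$.
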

\begin{proof}
    As explained in \cref{nota:canonicalLocalisationComparsion}, the canonical map in the statement induces an inclusion of subcomponents $\map_{\calg(\spectra_G)}^{\underline{S}^{-1}}(R, A) \hookrightarrow\map_{\calg(\spectra_G)}^{S_G^{-1}}(R,A)$.  Hence all we have to do is to show that all components in the target are hit. So suppose $\varphi \colon R\rightarrow A$ inverts elements in $S_G$. We need to show that for all $H\leq G$, $\varphi|_H\colon \res^G_HR\rightarrow \res^G_HA$ sends elements in $S_H\subseteq \pi_0^HR$ to units in $\pi_0^HA$. 
    
    Thus, fix $H\leq G$ and let $a\in S_H$. By hypothesis, there exists an $r\in\pi_0^HR$ such that $r\cdot a \in \res^G_HS_G$. Since $\varphi|_H$ inverts $r\cdot a$, let $x\in\pi_0^HA$ such that $1 = x\cdot \varphi|_H(r\cdot a) = x\cdot \varphi|_H(r)\cdot \varphi|_H(a)$. In particular, since everything is commutative, $x\cdot \varphi|_H(r)$ is the inverse of $\varphi|_H(a)$, and so $\varphi|_H$ inverts $a$ too. Therefore, since $a$ was arbitrary, we see that $\varphi|_H$ must have inverted all of $S_H$ as required.

    For the last statement, first observe that $\res^G_KS^{-1}_GR\simeq (\res^G_KS_G)^{-1}\res^G_KR$. Hence, since $\res^G_KS_G\subseteq S_K\subseteq \pi_0^KR$, we see that a priori $\res^G_KS^{-1}_GR$ has inverted possibly fewer elements than has $S^{-1}_K\res^G_KR$. However, the same argument as in the previous paragraph shows that under our hypothesis on $R$, we indeed have $\res^G_KS^{-1}_GR\simeq S^{-1}_K\res^G_KR$ as wanted.
\end{proof}

\begin{lemma}\label{lem:normClosureTorsionExtension}
    Let $R\in\calg_G(\myuline{\spectra}_G)$ be a $G$--$\mathbb{E}_{\infty}$--ring object and $\underline{S}\subseteq \underline{\pi}_0R$ be a $G$--subset that is closed under the norms. Then $\underline{S}$ satisfies \cref{condition:torsionExtensions}.   
\end{lemma}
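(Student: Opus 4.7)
The plan is to use the norm structure on $R$ to produce, for each $a \in S_H$, a distinguished element of $S_G$ whose restriction to $H$ has $a$ as an explicit factor. More precisely, fix $H \leq G$ and $a \in S_H$. Since $\underline{S}$ is assumed closed under the multiplicative norms, the element $N^G_H(a) \in \pi_0^G R$ lies in $S_G$, and hence its restriction $\res^G_H N^G_H(a) \in \res^G_H S_G \subseteq \pi_0^H R$. The task is then to exhibit $r \in \pi_0^H R$ such that $r \cdot a = \res^G_H N^G_H(a)$.

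The key input here is the double coset formula for the composite $\res^G_H \circ N^G_H$, which is part of the Tambara/$G$-$\mathbb{E}_\infty$ structure on $\underline{\pi}_0 R$. Concretely, this formula expresses
\[\res^G_H N^G_H(a) \;=\; \prod_{[g] \in H\backslash G/H} N^H_{H \cap gHg^{-1}}\, c_g\, \res^H_{g^{-1}Hg \cap H}(a),\]
as a product in the commutative ring $\pi_0^H R$. The identity double coset $[e]$ contributes the factor $N^H_H(a) = a$ itself, since all the subgroups involved equal $H$ and $c_e = \id$. Hence we can isolate this factor and write $\res^G_H N^G_H(a) = a \cdot r$, where $r$ is the product over the remaining double cosets $[g] \neq [e]$; this $r$ lies in $\pi_0^H R$ and is the element demanded by \cref{condition:torsionExtensions}.

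The main obstacle is really a bookkeeping one: citing the double coset formula for norms at the correct level of generality in the $G$-$\mathbb{E}_\infty$-ring setting (rather than only for Tambara functors coming from genuine commutative ring $G$-spectra, which is the setting of \cite{HHR}). In the body of the paper only $\pi_0$ is needed, which is a Tambara functor and so the classical formulation applies directly; alternatively one can argue abstractly from the universal property of norms as left adjoints in $G$-symmetric monoidal categories and use the pointwise monoidality of $\res^G_H$ to deduce the multiplicative decomposition. Either way the proof reduces to the short chain of formal manipulations above, after which nothing further is required.
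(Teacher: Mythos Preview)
Your proof is correct and follows essentially the same approach as the paper: apply norm--closure to get $N^G_H(a)\in S_G$, restrict to $H$, invoke the double coset formula for $\res^G_H N^G_H$, and observe that the identity double coset contributes $a$ as a factor. The only differences are cosmetic (notation for conjugation and subgroup conventions in the formula), and your additional remarks on justifying the double coset formula in this generality go slightly beyond what the paper spells out.
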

\begin{proof}
    Fix $H\leq G$ and let $a\in S_H$. We want to show that there is an $r\in\pi_0^HR$ such that $r\cdot a\in \res^G_HS_G$. For this, consider $\norm^G_Ha\in \pi_0^GR$ which is in fact in $S_G\subseteq \pi_0^GR$ by the norm--closure hypothesis. Then by the norm double coset formula, we get
    \[\res^G_H\norm^G_Ha = \prod_{g\in H\backslash G/H} \norm^H_{H^g\cap H}g_*\res^H_{H\cap H^g}a \in \res^G_HS_G\]
    where $a$ is a factor on the right (ie. when $g=e$), whence the claim.
\end{proof}

\begin{lemma}\label{lem:normClosureMonoidRing}
    If $M\in\cmonoid_G(\underline{\spc}_G)$, then $\underline{\pi}_M\subseteq \underline{\pi}_0\sphere_G[M]$ is closed under the norms.
\end{lemma}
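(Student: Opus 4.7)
The plan is to use that $\sphere_G[-]\colon \underline{\spc}_G\ttimes\to \myuline{\spectra}_G^{\underline{\otimes}}$ is $G$--strong symmetric monoidal (as recalled in the proof of \cref{lem:basicAdjunctionCMonNalg}), which in particular yields natural equivalences $\sphere_G[\prod_{G/H}\res^G_H Y]\simeq \norm^G_H\res^G_H \sphere_G[Y]$ for every $Y\in \underline{\spc}_G$. Applied to $M\in \cmonoid_G(\underline{\spc}_G)\simeq \calg_G(\underline{\spc}_G\ttimes)$, this identifies the norm structure map of $\sphere_G[M]$ (as a $G$--$\mathbb{E}_{\infty}$--ring supplied by \cref{lem:basicAdjunctionCMonNalg}) with $\sphere_G[\mu^G_H]\colon \sphere_G[\prod_{G/H}\res^G_HM]\to \sphere_G[M]$, the image under $\sphere_G[-]$ of the equivariant multiplication $\mu^G_H\colon \prod_{G/H}\res^G_HM\to M$ on $M$.

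Given $a=[m]\in \pi_M^H$ with $m\in \pi_0 M^H$, use the equivalence $(\prod_{G/H}\res^G_HM)^G\simeq M^H$ from the adjunction $\res^G_H\dashv \prod_{G/H}$ to realise $m$ as a $G$--fixed point $\widetilde m\colon \ast\to \prod_{G/H}\res^G_HM$. Setting $n\coloneqq \mu^G_H\circ \widetilde m\in \pi_0 M^G$, the Hurewicz class $[n]\in \pi_0^G \sphere_G[M]$ lies in $\pi_M^G$ by definition, so it suffices to verify $[n]=\norm^G_H a$.

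By definition of the multiplicative norm on $\pi_0$, $\norm^G_H a$ is represented by the composite $\sphere\simeq \norm^G_H\res^G_H\sphere\to \norm^G_H\res^G_H\sphere_G[M]\to \sphere_G[M]$, whose middle arrow is the norm functor applied to a representative $\sphere\to \res^G_H\sphere_G[M]$ of $a$, and whose last map is the norm structure. Via the $G$--strong monoidality equivalence of the first paragraph, the middle arrow is identified with $\sphere_G[\widetilde m]\colon \sphere\simeq \sphere_G[\ast]\to \sphere_G[\prod_{G/H}\res^G_HM]$, and the last map with $\sphere_G[\mu^G_H]$; their composite is $\sphere_G[\mu^G_H\circ \widetilde m]=\sphere_G[n]=[n]$, giving $\norm^G_H a=[n]\in \pi_M^G$ as wanted.

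The main technical obstacle is making rigorous the identifications in the first and third paragraphs---namely, that the norm structure map on the $G$--$\mathbb{E}_{\infty}$--ring $\sphere_G[M]$ coming from \cref{lem:basicAdjunctionCMonNalg} is literally $\sphere_G[-]$ applied to the equivariant multiplication on $M$. This should be a formal consequence of the functoriality of $\calg_G(-)$ in $G$--strong symmetric monoidal functors between $G$--cartesian and $G$--symmetric monoidal $G$--categories, paralleling the reasoning cited in the proof of \cref{lem:basicAdjunctionCMonNalg}, but may require carefully unwinding the $G$--operadic definitions to confirm the compatibility at the level of the norm structure maps.
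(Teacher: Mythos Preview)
Your argument is correct and follows essentially the same approach as the paper's own proof: both identify $\norm^G_H$ of the Hurewicz image of $m\in\pi_0^H M$ with the Hurewicz image of the equivariant sum $\oplus_{G/H}m\in\pi_0^G M$ (your $\mu^G_H\circ\widetilde m$), using the $G$--symmetric monoidality of $\sphere_G[-]$ to rewrite the norm composite as $\sphere_G[-]$ applied to a map of $G$--spaces. Regarding your stated technical obstacle, the paper packages the required compatibility as a naturality square for the adjunction unit $\id\Rightarrow\Omega^{\infty}_G\sphere_G[-]$ applied to $\oplus_{G/H}\colon\prod_{G/H}\res^G_HM\to M$, which you may find a cleaner way to make the identification rigorous.
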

\begin{proof}
    First of all, by \cref{lem:basicAdjunctionCMonNalg} we know $\sphere_G[M]$ refines to a $G$--$\mathbb{E}_{\infty}$--ring object. Now fix $H\leq G$ and suppose we have $n\in\pi_0^HM$ with associated element $\overline{n}\in \pi_0^H\sphere_G[M]$. Thus by definition the normed element $\norm^G_H\overline{n}\in \pi_0^G\sphere_G[M]$ is given by 
    \[\sphere_G=\norm^G_H\sphere_H\xlongrightarrow{\norm^G_H\overline{n}} \norm^G_H\res^G_H\sphere_G[M] \simeq\sphere_G[\prod_{G/H}\res^G_HM]\xlongrightarrow{\sphere[\oplus_{G/H}]} \sphere_G[M]\] 
    Here $\oplus_{G/H}\colon \prod_{G/H}\res^G_HM\rightarrow M$ is the $G$--semiadditivity adjunction counit of an object $M\in\cmonoid_G(\underline{\spc}_G)$. The middle equivalence is since $\sphere_G[\prod_{G/H}-]\simeq \norm^G_H\sphere_H[-]$ from the $G$--symmetric monoidality of the functor $\sphere_G[-]$ from \cref{lem:basicAdjunctionCMonNalg}
    
    Now,  the natural transformation $(-) \Rightarrow \Omega^{\infty}_G\sphere_G(-)$ from \cref{lem:basicAdjunctionCMonNalg} together with the adjunction counit $\prod^G_H\res^G_HM\xrightarrow{\oplus_{G/H}} M$  yield the commuting diagram
    \begin{center}
        \begin{tikzcd}
            \ast \dar["n"']\rar& \loops_G\sphere_G\dar["\loops_G\norm^G_H\overline{n}"]\\
            \prod^G_H\res^G_HM \dar["\oplus_{G/H}"']\rar & \Omega^{\infty}_G\sphere_G[\prod^G_H\res^G_HM] \simeq \loops_G\norm^G_H\res^G_H\sphere_G[M]\dar["\Omega^{\infty}_G\sphere_G{[}\oplus_{G/H}{]}"]\\
            M\rar & \Omega^{\infty}_G\sphere_G[M]
        \end{tikzcd}
    \end{center}
    This implies that the normed up element $\norm^G_H\overline{n}\in \pi_0^G\sphere_G[M]$ already came from the element $\oplus_{G/H}n\in \pi_0^GM$ and so $\underline{\pi}_M\subseteq \underline{\pi}_0\sphere_G[M]$ is closed under norms.
\end{proof}

We now cash in all the work we have done to complete the proof of the theorem.

\begin{proof}[Proof of   \cref{mainThmA} (ii)]
    The main point is that we have commuting squares
    \begin{center}
        \begin{tikzcd}
            \cmonoid_G(\underline{\spc}_G) \rar[shift left =1, "\sphere_G{[}-{]}"] \dar["\forget"']& \calg_G(\myuline{\spectra}_G^{\underline{\otimes}})\lar[shift left = 1, "\loops_G"] \dar["\forget"]\\
            \cmonoid({\spc}_G) \rar[shift left =1, "\sphere_G{[}-{]}"] & \calg({\spectra}_G^{\otimes})\lar[shift left = 1, "\loops_G"] \\
        \end{tikzcd}
    \end{center}
    obtained by using the $G$--lax symmetric monoidality of the adjunction $\sphere_G[-]\dashv \loops_G$ from \cref{lem:basicAdjunctionCMonNalg} together with the commuting square in \cref{cons:forgettingNorms}. In fact, for this proof we only need that the $(\sphere_G[-],\forget)$ square commutes. Thus, if $M\in \cmonoid_G(\underline{\spc}_G)$ so that $\Omega BM$ is again in $\cmonoid_G(\underline{\spc}_G)$ by \cref{lem:groupCompletion}, then $\sphere_G[\Omega BM]$ - which is equivalent to $L_{(\underline{\pi}_M)^{-1}}\sphere_G[M]$ by part (i) of the theorem - naturally refines to the structure of an object in $\calg_G(\myuline{\spectra}_G^{\underline{\otimes}})$, ie. it canonically attains the multiplicative norms.  The final statement of part (ii) is then a direct combination of \cref{lem:TorsionExtensionLocalisation,lem:normClosureTorsionExtension,lem:normClosureMonoidRing}.
\end{proof}

\begin{remark}
    The norm closure of the subset $\underline{\pi}_M\subseteq \underline{\pi}_0\sphere_G[M]$ from \cref{lem:normClosureMonoidRing} should have indicated why the localisation $(\underline{\pi}_M)^{-1}\sphere_G[M]$ even had a chance of attaining the multiplicative norms. In general, a localisation on a $G$--$\mathbb{E}_{\infty}$--ring need not refine again to a $G$--$\mathbb{E}_{\infty}$--ring, as is well documented for instance in \cite{hillMultiplicativeLocalisation}. Nonetheless, the norm closure of a multiplicative subset is a necessary and sufficient property for the localisation to refine to the structure of a $G$--$\mathbb{E}_{\infty}$--ring. This can be deduced for example from \cite[Lem. 5.27]{shahQuigley}.
\end{remark}

Finally, we use \cref{mainThmA} to quickly deduce \cref{thm:Bredon}.

\begin{proof}[Proof of \cref{thm:Bredon}]
    Let $K\leq G$ and $\underline{N}$ a $G$--Mackey functor, thought of as an Eilenberg--Mac Lane genuine $G$--spectrum (see for example \cite[Ex. 4.41]{schwede}). Then by definition of $RO(G)$--graded Bredon homology (\textit{loc. cit.}), we have 
    $H_{\star}^K(\Omega BM; \underline{N}) = \pi_{\star}^K\big(\underline{N}\otimes \sphere_G[\Omega BM]\big)$. Moreover, by the second part of \cref{lem:TorsionExtensionLocalisation}, we know that $\res^G_K(\pi^G_M)^{-1}\sphere_G[M]\simeq (\pi^K_M)^{-1}\res^G_K\sphere_G[M]$ and so we get
    \[H_{\star}^K(M; \underline{N})[(\pi_0^KM)^{-1}] = (\pi^K_M)^{-1}\pi^K_{\star}\big(\underline{N}\otimes \sphere_G[M]\big) \cong\pi^K_{\star}\big(\underline{N}\otimes (\pi^G_M)^{-1}\sphere_G[M]\big)\]
    whence the result by \cref{mainThmA}.
\end{proof}

\section{Final remarks}\label{section:finalRemarks}

In this last section, we will comment on three points:
\begin{itemize}
    \item We analyse the geometric fixed points of the abstract localisation from \cref{nota:canonicalLocalisationComparsion} and show that it has an easy description,
    \item we explain a generic situation where the theorem might be applied,
    \item and we give a plentiful source of examples of $G$--$\mathbb{E}_{\infty}$--monoid $G$--spaces.
\end{itemize}

For the first point, as we have remarked in \cref{nota:canonicalLocalisationComparsion}, the abstract localisation $L_{\underline{S}^{-1}}R$, if it exists, has no reason to have a nice description in general. Notwithstanding, it does interact well with the geometric fixed points, as we now explain.

\begin{observation}\label{obs:geomFixPointsInvert}
    Let $\underline{S}\subseteq \underline{\pi}_0R$ be a multiplicative $G$--subset for some $R\in\calg(\spectra_G)$. Recall for instance from \cite[Cons. 6.10, Thm. 6.11]{mathewNaumannNoelI} that we have a lax symmetric monoidal Bousfield localisation $\Phi^G \colon \spectra_G \rightleftharpoons \spectra \: : \Xi^G$    which then induces a Bousfield localisation $\Phi^G \colon \calg(\spectra_G) \rightleftharpoons \calg(\spectra) \: : \Xi^G$. Here for $X\in\spectra$, $\Xi^GX$ is the $G$--spectrum such that $(\Xi^GX)^G\simeq X$ and $(\Xi^GX)^H\simeq 0$ for $H\lneq G$. Classically, this is also written as $\widetilde{E\mathcal{P}}\otimes X$ where $\mathcal{P}$ is the proper family of subgroups of $G$. We claim that the resulting equivalence $\map_{\calg(\spectra_G)}(R,\Xi^GA)\simeq \map_{\calg(\spectra)}(\Phi^GR,A)$ restricts to an equivalence
    \[\map_{\calg(\spectra_G)}^{\underline{S}^{-1}}(R,\Xi^GA)\simeq \map_{\calg(\spectra)}^{(\Phi^GS_G)^{-1}}(\Phi^GR,A)\]
    To see this, since $\Phi^G\Xi^G\simeq \id$, we know $\Phi^G$ induces an inclusion
    \[\map_{\calg(\spectra_G)}^{\underline{S}^{-1}}(R,\Xi^GA) \xhookrightarrow{\phantom{\quad}}\map_{\calg(\spectra)}^{(\Phi^GS_G)^{-1}}(\Phi^GR,A)\]
    To see that this is even an equivalence, suppose we have $\varphi \colon \Phi^GR\rightarrow A$ which inverts $\Phi^GS_G\subseteq \pi_0\Phi^GR$. The adjoint $\overline{\varphi} \colon R\rightarrow \Xi^GA$ is given by the composite
    \[\overline{\varphi} \colon R \xlongrightarrow{\eta} \Xi^G\Phi^GR \xlongrightarrow{\Xi^G\varphi} \Xi^GA\]
    where the adjunction unit $\eta$ is a map of $\mathbb{E}_{\infty}$--rings and sends elements in $S_G$ to elements in $\Phi^GS_G$. Therefore, $\overline{\varphi}$ must invert all elements in $S_G$. Moreover, since for $H\lneq G$, $(\Xi^GA)^H$ are equivalent to the zero rings, the maps $\res^G_H\overline{\varphi} \colon \res^G_HR \rightarrow \res^G_H\Xi^GA\simeq 0$ send everything to units for trivial reasons, and so in total $\overline{\varphi}$ indeed inverts elements in $\underline{S}$ as was to be shown.  
\end{observation}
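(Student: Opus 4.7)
The plan is to show that the natural adjunction equivalence $\map_{\calg(\spectra_G)}(R, \Xi^G A) \simeq \map_{\calg(\spectra)}(\Phi^G R, A)$ restricts to an equivalence on the indicated subcomponents. The pivotal first observation I would record is that the condition ``inverts $\underline{S}$'' on the LHS collapses to a condition at $G$-fixed points only: since $(\Xi^G A)^H \simeq 0$ for $H \lneq G$, the ring $\pi_0^H \Xi^G A$ is the zero ring, in which every element is trivially a unit. Hence a map $\varphi \colon R \to \Xi^G A$ in $\calg(\spectra_G)$ inverts $\underline{S}$ if and only if $\pi_0^G \varphi$ sends $S_G$ to units in $\pi_0^G \Xi^G A \simeq \pi_0 A$.

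I would then match this condition across the adjunction in both directions. In the forward direction, applying $\Phi^G$ sends a map $\varphi$ inverting $S_G$ to $\Phi^G \varphi \colon \Phi^G R \to \Phi^G \Xi^G A \simeq A$, which carries $\Phi^G s$ to $\Phi^G(\varphi(s))$, a unit, so the inclusion of the LHS subcomponent into the RHS is clear. For the converse, given $\psi \colon \Phi^G R \to A$ inverting $\Phi^G S_G$, its adjoint is the composite $\overline{\psi} = \Xi^G \psi \circ \eta_R$, where $\eta_R \colon R \to \Xi^G \Phi^G R$ is the adjunction unit. The key computation is that $\pi_0^G \eta_R \colon \pi_0^G R \to \pi_0^G \Xi^G \Phi^G R \simeq \pi_0 \Phi^G R$ coincides with the tautological map $s \mapsto \Phi^G s$ that defined the subset $\Phi^G S_G$. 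Granting this, $\pi_0^G \overline{\psi}(s) = \psi(\Phi^G s)$ is a unit for every $s \in S_G$, and by the first paragraph $\overline{\psi}$ lies in the LHS subcomponent; since $(\Phi^G, \Xi^G)$ is already an adjoint equivalence on the full mapping spaces, these two assignments are mutually inverse, giving the desired equivalence on subcomponents.

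The only mild obstacle is the identification of $\pi_0^G \eta_R$ with the tautological map denoted $\Phi^G$ on homotopy rings. This is essentially built into the setup: $\Phi^G$ is a symmetric monoidal Bousfield localization with $\Phi^G \Xi^G \simeq \id$, and at the level of $\pi_0^G$ the unit map is, modulo the canonical identification $\pi_0^G \Xi^G X \simeq \pi_0 X$, precisely the effect of $\Phi^G$ on the homotopy ring. Once this is in hand, the argument is purely formal and no further structure beyond the adjunction and the vanishing of $\Xi^G A$ off $G$ is needed.
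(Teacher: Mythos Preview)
Your proposal is correct and follows essentially the same approach as the paper: both arguments use the adjoint description $\overline{\psi}=\Xi^G\psi\circ\eta_R$, identify $\pi_0^G\eta_R$ with the tautological map $s\mapsto\Phi^G s$, and exploit the vanishing $(\Xi^GA)^H\simeq 0$ for $H\lneq G$ to reduce the $\underline{S}$--inverting condition to the $S_G$--level. The only cosmetic difference is that you front--load the reduction to $S_G$ while the paper records it at the end.
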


\begin{proposition}\label{prop:geomFixPointsLocalisation}
    Let $R\in\calg(\spectra_G)$, $\underline{S}\subseteq \underline{\pi}_0R$  a multiplicative subset, and suppose $L_{\underline{S}^{-1}}R$ exists. Then the canonical map $\Phi^GR\rightarrow \Phi^GL_{\underline{S}^{-1}}R$ induces an equivalence $(\Phi^GS_G)^{-1}\Phi^GR\simeq \Phi^GL_{\underline{S}^{-1}}R$.
\end{proposition}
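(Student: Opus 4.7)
The plan is to show that both sides corepresent the same functor on $\calg(\spectra)$ and then verify that the Yoneda equivalence is induced by the canonical comparison map. The strategy proceeds via the chain of natural equivalences supplied by the $\Phi^G \dashv \Xi^G$ adjunction on $\calg$ together with \cref{obs:geomFixPointsInvert}.

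Concretely, for any $A \in \calg(\spectra)$, I would string together the following equivalences:
\begin{align*}
\map_{\calg(\spectra)}\big(\Phi^G L_{\underline{S}^{-1}}R,\, A\big)
&\simeq \map_{\calg(\spectra_G)}\big(L_{\underline{S}^{-1}}R,\, \Xi^G A\big) \\
&\simeq \map_{\calg(\spectra_G)}^{\underline{S}^{-1}}\big(R,\, \Xi^G A\big) \\
&\simeq \map_{\calg(\spectra)}^{(\Phi^G S_G)^{-1}}\big(\Phi^G R,\, A\big) \\
&\simeq \map_{\calg(\spectra)}\big((\Phi^G S_G)^{-1}\Phi^G R,\, A\big).
\end{align*}
The first line is the $\Phi^G \dashv \Xi^G$ adjunction on commutative algebras; the second is the defining universal property of $L_{\underline{S}^{-1}}R$ from \cref{nota:canonicalLocalisationComparsion}; the third is precisely the content of \cref{obs:geomFixPointsInvert}; and the fourth is the universal property of the telescopic localisation $(\Phi^G S_G)^{-1}\Phi^G R$. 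By the Yoneda lemma this produces an equivalence $(\Phi^G S_G)^{-1}\Phi^G R \xrightarrow{\simeq} \Phi^G L_{\underline{S}^{-1}}R$.

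It remains to identify this Yoneda equivalence with the canonical map in the statement. Tracing the universal elements backward: the universal element for $\Phi^G L_{\underline{S}^{-1}}R$ is $\Phi^G$ applied to the unit $R \to L_{\underline{S}^{-1}}R$, which is a map in $\calg(\spectra_G)$ inverting $\underline{S}$ and hence, after applying the symmetric monoidal $\Phi^G$, a map in $\calg(\spectra)$ inverting $\Phi^G S_G$. This map therefore factors uniquely through the telescopic localisation, and the resulting factorisation $(\Phi^G S_G)^{-1}\Phi^G R \to \Phi^G L_{\underline{S}^{-1}}R$ is exactly the map obtained from Yoneda, and also matches the canonical comparison map of the proposition. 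I do not anticipate a major obstacle here; the only thing to be careful about is checking the compatibility of the Bousfield localisation $\Phi^G \dashv \Xi^G$ with passage to $\calg$, but this is already recorded in \cref{obs:geomFixPointsInvert} (cf. \cite{mathewNaumannNoelI}), so the argument is essentially bookkeeping at the level of corepresentable functors.
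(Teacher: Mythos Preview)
Your proposal is correct and follows essentially the same route as the paper: the identical four-line chain of mapping-space equivalences using the $\Phi^G \dashv \Xi^G$ adjunction, the defining property of $L_{\underline{S}^{-1}}R$, \cref{obs:geomFixPointsInvert}, and the telescopic universal property. Your additional paragraph tracing the universal element to identify the Yoneda equivalence with the canonical map is a nice extra detail that the paper leaves implicit.
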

\begin{proof}
Let $A\in\calg(\spectra)$. Then
    \begin{equation*}
        \begin{split}
            \map_{\calg(\spectra)}(\Phi^GL_{\underline{S}^{-1}}R,A) & \simeq \map_{\calg(\spectra_G)}(L_{\underline{S}^{-1}}R, \Xi^G A)\\
            &\simeq \map_{\calg(\spectra_G)}^{\underline{S}^{-1}}(R,\Xi^GA)\\
            &\simeq \map_{\calg(\spectra)}^{(\Phi^GS_G)^{-1}}(\Phi^GR,A)\\
            &\simeq \map_{\calg(\spectra)}((\Phi^GS_G)^{-1}\Phi^GR,A)\\
        \end{split}
    \end{equation*}
    where  the third equivalence is by \cref{obs:geomFixPointsInvert}.
\end{proof}

\begin{remark}\label{example:didactic}
    Let $M\in\cmonoid(\spc_G)$. We claim that $\Phi^G\pi_M^G = \pi_{M^G}\subseteq \pi_0\sphere[M^G]$ where $\pi_{M^G}$ is the image of the nonequivariant Hurewicz map $\pi_0M^G \rightarrow \pi_0\sphere[M^G]$. Given this, we see by \cref{prop:geomFixPointsLocalisation} that 
    \[\Phi^G(\underline{\pi}_M^{-1}\sphere_G[M]) \simeq (\pi_{M^G})^{-1}\sphere[M^G]\]
    and so applying $\Phi^G$ reduces \cref{mainThmA} (i) to the classical group--completion theorem formulated for example in \cite[Thm. 1]{nikolaus}. To prove the claim, we want to show that the inclusion $\Phi^G\pi_M^G\subseteq \pi_{M^G}$ is surjective. By one of the defining properties of $\Phi^G$, we know that there is a commuting square
    \begin{center}
        \begin{tikzcd}
            \spc_G \rar["\sphere_G{[}-{]}"]\dar["(-)^G"'] & \spectra_G\dar["\Phi^G"]\\
            \spc \rar["\sphere"] & \spectra
        \end{tikzcd}
    \end{center}
    which yields a commuting square
    \begin{center}
    \footnotesize
        \begin{tikzcd}
            \pi_0^GM=\pi_0\map_{\spc_G}(\ast, M) \rar["\sphere_G{[}-{]}"]\dar["\cong"] &  \pi_0^G\sphere_G[M] = \pi_0\map_{{\spectra_G}}\big(\sphere_G, \sphere_G[M]\big)\dar["\Phi^G"]\\
            \pi_0(M^G)=\pi_0\map_{\spc}(\ast, M^G) \rar["\sphere{[}-{]}"] \rar & \pi_0\sphere[M^G] = \pi_0\map_{{\spectra}}\big(\sphere, \sphere[M^G]\big)
        \end{tikzcd}
    \end{center}
    \normalsize
    This implies that $\Phi^G\pi_M^G\subseteq \pi_{M^G}$ is surjective, as desired.
\end{remark}

Next, we turn to the matter of recording a generic toy situation where our theorem might be useful. This manoeuvre is an immediate generalisation of its (standard) nonequivariant analogue.

\begin{proposition}[Equivariant simply--connected homology Whitehead theorem]
    Suppose we have a map $f\colon X\rightarrow Y $ of $G$--spaces which induces an equivalence $\sphere_Gf\colon \sphere_G[X]\rightarrow \sphere_G[Y]$. Suppose moreover that $X, Y$ are both $G$--simply--connected (ie. $X^H$ and $Y^H$ are simply--connected for all $H\leq G$). Then the map $f\colon X\rightarrow Y$ was already a $G$--equivalence. 
\end{proposition}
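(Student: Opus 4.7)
The plan is to reduce the statement, fixed-point subspace by fixed-point subspace, to the classical nonequivariant simply-connected homological Whitehead theorem. By definition, $f\colon X\rightarrow Y$ is a $G$-equivalence precisely when $f^H\colon X^H\rightarrow Y^H$ is a weak equivalence for every subgroup $H\leq G$. Since each $X^H$ and $Y^H$ is simply-connected by hypothesis, the classical Whitehead theorem (via Hurewicz) says that $f^H$ is a weak equivalence as soon as it induces an isomorphism on integral homology---equivalently, as soon as $\sphere[f^H]\colon \sphere[X^H]\rightarrow \sphere[Y^H]$ is an equivalence in $\spectra$.

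The key step is therefore to establish, naturally in $X\in\spc_G$ and for each $H\leq G$, the identification $\Phi^H\sphere_G[X]\simeq \sphere[X^H]$. This is the multi-subgroup analogue of the commuting square recorded in \cref{example:didactic}: restricting first along $H\hookrightarrow G$, the geometric fixed points functor $\Phi^H\colon \spectra_H\rightarrow \spectra$ fits into a commuting square
\begin{center}
\begin{tikzcd}
\spc_H \rar["\sphere_H{[}-{]}"] \dar["(-)^H"'] & \spectra_H \dar["\Phi^H"] \\
\spc \rar["\sphere{[}-{]}"] & \spectra
\end{tikzcd}
\end{center}
so that $\Phi^H\sphere_G[X]\simeq \Phi^H\sphere_H[\res^G_HX]\simeq \sphere[(\res^G_HX)^H]=\sphere[X^H]$, and similarly for $Y$, compatibly with the map induced by $f$.

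Applying this observation to the hypothesised equivalence $\sphere_Gf$ then yields an equivalence $\sphere[X^H]\xrightarrow{\sim} \sphere[Y^H]$ in $\spectra$ for every $H\leq G$. Hence each $f^H$ is a map between simply-connected spaces inducing an equivalence of suspension spectra, whence a weak equivalence by the classical result; letting $H$ vary says precisely that $f$ is a $G$-equivalence. I do not foresee a substantive obstacle here: the only nontrivial input, beyond unpacking definitions, is the formula $\Phi^H\sphere_G[-]\simeq \sphere[(-)^H]$, which is a standard compatibility of geometric fixed points with suspension spectra and is already implicitly used at $H=G$ in \cref{example:didactic}.
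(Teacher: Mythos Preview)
Your proposal is correct and follows essentially the same approach as the paper: apply $\Phi^H$ to the equivalence $\sphere_Gf$ to obtain $\sphere[f^H]\colon \sphere[X^H]\xrightarrow{\simeq}\sphere[Y^H]$, then invoke the ordinary simply--connected homology Whitehead theorem to conclude that each $f^H$ is an equivalence. The only difference is that you spell out the restriction step $\Phi^H\sphere_G[-]\simeq \Phi^H\sphere_H[\res^G_H(-)]\simeq \sphere[(-)^H]$ slightly more explicitly than the paper does.
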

\begin{proof}
    To see this, we need to show that we have an equivalence for all fixed points. So let $H\leq G$. Applying the $H$--geometric fixed points $\Phi^H$ to the equivalence $\sphere_Gf$ gives us an equivalence $\Phi^H\sphere_Gf\simeq \sphere[f^H]\colon \sphere[X^H]\xlongrightarrow{\simeq} \sphere[Y^H]$. Hence, by the ordinary simply--connected homology Whitehead theorem, the map of spaces  $f^H \colon X^H\rightarrow Y^H$ is an equivalence, as was to be shown.
\end{proof}

Our \cref{mainThmA} can then potentially be used in conjunction with this in the following way. Suppose we have a map of $G$--$\mathbb{E}_{\infty}$--monoids $N\rightarrow\Omega BM$ where we already understand $\sphere_G[N]$ and where $\Omega BM$ and $N$ are $G$--simply--connected. Since the theorem gives a formula for $\sphere_G[\Omega BM]$, we might be able to use it to show that $\sphere_G[N]\rightarrow\sphere_G[\Omega BM]$ is an equivalence. If this were true, then by the equivariant Whitehead proposition above, we can deduce that $N\rightarrow\Omega BM$ is an equivalence, thus giving a computation of $\Omega BM$ in terms of $N$.

Of course, this toy situation might not be so applicable since $G$--simply connectedness is an unreasonable condition to demand in general. Our intention for this was only to indicate a template over which other variations might be beneficial in specific circumstances.

Lastly, we end the main body of this note by recording a huge standard source of potentially interesting examples of $G$--$\mathbb{E}_{\infty}$--monoid $G$--spaces to consider.

\begin{example}\label{example:GMonoidSpaces}
    $G$--$\mathbb{E}_{\infty}$--monoid $G$--spaces, for which the localisation formula of \cref{mainThmA} (ii) holds, are in abundant supply. One fertile source is small semiadditive $\infty$--categories (which include stable $\infty$--categories) equipped with $G$--actions, ie. objects in $\func(BG,\cat^{\oplus}_{\infty})$. If $\sC$ were one such instance, then $\{\sC^{hH}\}_{H\leq G}$ assembles to a $G$--$\mathbb{E}_{\infty}$--monoid $G$--category. In other words, it is an object in $\mackey_G(\cat_{\infty}^{\oplus})$ (cf.  \cite[$\S8$]{barwick2} for an explanation of this). Then taking the groupoid core yields a $G$--$\mathbb{E}_{\infty}$--monoid $G$--space $\{(\sC^{hH})^{\simeq}\}_{H\leq G}\in \mackey_G(\spc)\simeq \cmonoid_G(\underline{\spc}_G)$. In fact, this procedure of producing $G$--$\mathbb{E}_{\infty}$--monoid $G$--spaces by taking groupoid cores works more generally for any $G$--semiadditive $G$--category.
    
    Concrete examples belonging to this template include equipping the trivial $G$--action on categories like finitely generated projective $R$--modules $\mathrm{Proj}_R$ for $R\in\mathrm{CRing}$ or perfect $A$--modules $\mathrm{Perf}_A$ for $A\in\calg(\spectra)$. These yield the objects $\{\map(BH, \mathrm{Proj}_R^{\simeq})\}_{H\leq G}$ and $\{\map(BH, \mathrm{Perf}_A^{\simeq})\}_{H\leq G}$ in $\cmonoid_G(\underline{\spc}_G)$, the group--completions of which give the so--called \textit{Swan} equivariant K--theories. Familiar examples of $G$--spectra obtained in this manner include $\mathrm{ku}_G$ and $\mathrm{ko}_G$. Another interesting source of semiadditive categories equipped with $G$--actions come from finite Galois extensions of fields $K\subseteq L$. In this case, the $G$--Galois action on $\mathrm{Vect}_L^{\mathrm{fd}}$ yields the $G$--$\mathbb{E}_{\infty}$--monoid $G$--space
    $\{(\mathrm{Vect}_{L^H}^{\mathrm{fd}})^{\simeq}\}_{H\leq G}$.
\end{example}

\appendix

\section{Algebras in equivariant cartesian symmetric monoidal structures}\label{appendix}

We will provide in this appendix a proof of the folklore result that $\cmonoid_G(\underline{\sC})\simeq \calg_G(\underline{\sC}\ttimes)$, which heuristic intuition we explain at the end of \cref{setting:GCategories}. The proof will be a straightforward - if a bit tedious - adaptation of the proof by Lurie from \cite[Prop. 2.4.2.5]{lurieHA} as organised by Chu and Haugseng \cite{chuHaugsengII} in the language of so--called \textit{cartesian patterns}. The main idea of Lurie's proof is that there is a nice model for the cartesian symmetric monoidal structure which embeds inside a larger category which in turn admits a convenient universal property of being mapped into. In the interest of space and as this is a necessarily technical result, we will assume some familiarity with the formalism and underpinnings of parametrised homotopy theory (cf. \cite{shahThesis, nardinExposeIV}), as well as the associated factorisation system and operad theory as laid out in \cite[\S3, \S4]{shahPaperII} and \cite[\S2.1-\S2.3]{nardinShah}. We will however provide basic recollections and precise references for the sake of comprehensibility. Lastly, we should also mention that this is an extremely brisk and minimalistic account sufficient for our purposes, and it might be interesting to investigate the notion of parametrised cartesian patterns along the level of generality in \cite{chuHaugsengII}. 

Our first order of business is to set up the basic theory of $G$--cartesian patterns and their associated monoids.

\begin{notation}
    It will be convenient to adopt the following conventions to lighten our notational load: for $\underline{\sC}$ a $G$--category, $H\leq G$, and a $H$--object $X\in\sC_H$ (which can equivalently be viewed as a $H$--functor $\underline{\ast}\rightarrow\res^G_H\underline{\sC}$),
    \begin{itemize}
        \item we write $\underline{\sC}_H$ for the $H$--category $\res^G_H\underline{\sC}$. Note that this does \textit{not} conflict with the notation $\sC_H$ from \cref{setting:GCategories}. As such, we will also write $H$--objects as $X\in\underline{\sC}_H$,
        \item we will write $\underline{\sC}_{X/}$ to mean the $H$--category $(\underline{\sC}_H)_{X/}$,
        \item for a $G$--functor $\underline{\D}\rightarrow\underline{\sC}$, we will write $\underline{\D}_X$ for the $H$--category $\underline{\ast}\times_{\underline{\sC}_H}\underline{\D}_H$, where $\underline{\ast}\rightarrow \underline{\sC}_H$ is the $H$--functor picking out $X$.
    \end{itemize}
    For a map $f\colon V\rightarrow W$ in $\orbit_G$, we write $f^*\colon \sC_W\rightarrow \sC_V$ for the restriction functor. If they exist, we write $f_!, f_*\colon \sC_V\rightarrow\sC_W$ for the indexed coproduct and indexed product associated to $f$, respectively (note that in \cite{nardinExposeIV} the notations for $f_!, f_*$ are given respectively by $\coprod_f, \prod_f$). In this case, we have $f_!\dashv f^*\dashv f_*$.
\end{notation}

\begin{definition}[``{\cite[Def. 2.1]{chuHaugsengII}}'']
    Let $\underline{\orbit}\in\cat_G$. A $G$\textit{--algebraic pattern structure on $\underline{\orbit}$} is a $G$--factorisation system (that is, a fibrewise factorisation system closed under the restriction functors, cf. \cite[Def. 3.1]{shahPaperII}) on $\underline{\orbit}$ together with a collection of objects which are termed \textit{elementary objects}. We term the left (resp. right) class as the fibrewise inert (resp. fibrewise active) morphisms. A morphism of $G$--algebraic patterns is a $G$--functor $\underline{\orbit}\rightarrow\underline{\proper}$ which preserves the fibrewise inert and active morphisms as well as the elementary objects. Write $\underline{\orbit}\inerts$ for the subcategory of  $\underline{\orbit}$ containing only the fibrewise inert morphisms, and write $\underline{\orbit}\elementary\subseteq \underline{\orbit}\inerts$ for the full subcategory of elementary objects and fibrewise inert morphisms.
\end{definition}

\begin{notation}
    Fix $H\leq G$ and $O\in\underline{\orbit}_H$ a $H$--object. Write $\underline{\orbit}_{O/}\elementary\coloneqq \underline{\orbit}\elementary\times_{\underline{\orbit}\inerts}\underline{\orbit}_{O/}\inerts$
    for the category of fibrewise inert maps from $O$ to elementary objects, and morphisms are fibrewise inert maps between these.
\end{notation}

\begin{notation}
    We will follow Chu and Haugseng's notation from \cite{chuHaugsengI,chuHaugsengII} and use $\rightarrowtail$ to denote inert maps and $\rightsquigarrow$ to denote active maps.
\end{notation}

\begin{example}
    Recall from \cite[Def. 2.1.2]{nardinShah} the $G$--category of finite pointed $G$--sets $\underline{\finite}_*$. This is given at level $H$ by $\func(BH,\finite_*)$. It can also be described explicitly as follows: the objects in level $H$ for some $H\leq G$ look like $[U\rightarrow G/H]$  where $U$ is a finite $G$--set, and a morphism from $[U\rightarrow G/H]$ to $[W\rightarrow G/K]$ looks like
    \begin{center}\scriptsize
        \begin{tikzcd}
            U\dar & Z \rar\lar \dar & W\dar\\
            G/H & G/K \lar\rar[equal] & G/K
        \end{tikzcd}
    \end{center}
    where all maps in sight are $G$--equivariant and the induced map $Z\rightarrow U\times_{G/H}G/K$ is a summand inclusion.

    This is the prime example of a $G$--algebraic pattern, using that algebraic patterns are closed under limits in $\cat_{\infty}$ by \cite[Cor. 5.5]{chuHaugsengI}. Concretely, when $K=H$, the fibrewise inert maps are the ones where $Z\rightarrow W$ is an equivalence, and the fibrewise active maps are those where the induced map $Z\rightarrow U\times_{G/H}G/K$ is an equivalence (see \cite[Def. 2.1.3]{nardinShah}); the elementary objects are the objects $[G/H\xrightarrow{=}G/H]$ at level $H $ for each $H\leq G$.
\end{example}

Following \cite[Def. 6.1]{chuHaugsengI}, we may make the following:

\begin{definition}\label{defn:uniqueActiveLifting}
    We say that a morphism $f\colon \underline{\orbit}\rightarrow\underline{\proper}$ of $G$--algebraic patterns has \textit{unique lifting of fibrewise active morphisms } if for every $H\leq G$ and fibrewise active morphism $\phi\colon P \rightarrow f(O)$ in ${\proper}_H$, the space of lifts of $\phi$ to a fibrewise active morphism $O'\rightarrow O$ in ${\orbit}_H$ is contractible.
\end{definition}

Since $G$--coinitiality is a fibrewise statement by the dual of \cite[Thm. 6.7]{shahThesis} and \cref{defn:uniqueActiveLifting} is also fibrewise, we may deduce immediately from \cite[Lem. 6.2]{chuHaugsengI} the following:

\begin{lemma}\label{lem:coinitialCharacterisationACtiveLifting}
    A morphism of $G$--algebraic patterns $f\colon \underline{\orbit}\rightarrow\underline{\proper}$ has unique lifting of fibrewise active morphisms if and only if for every $H\leq G$ and all $P\in{\proper}_H$, the functor $\underline{\orbit}\inerts_{P/}\rightarrow \underline{\orbit}_{P/}$ is $G$--coinitial. 
\end{lemma}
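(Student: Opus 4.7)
The plan is to reduce both sides of the biconditional to statements about ordinary $\infty$--categories and then invoke the nonequivariant \cite[Lem.\ 6.2]{chuHaugsengI}. Heuristically, the whole content is already bundled into the two fibrewise facts mentioned just above the statement, so my proof will amount to carefully unfolding the definitions.

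First I would observe that \cref{defn:uniqueActiveLifting} is formulated fibrewise by construction: $f$ has unique lifting of fibrewise active morphisms if and only if, for each $H\leq G$, the underlying functor $\orbit_H\rightarrow \proper_H$---viewed as a morphism of ordinary algebraic patterns, with its factorisation system obtained by restricting the $G$--factorisation system on $\underline{\orbit}$ and $\underline{\proper}$ to level $H$---has unique lifting of active morphisms in the sense of \cite[Def.\ 6.1]{chuHaugsengI}.

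Next, for the coinitiality side, I would appeal to the dual of \cite[Thm.\ 6.7]{shahThesis}: an $H$--functor of $H$--categories is $H$--coinitial if and only if for every $K\leq H$ the evaluation at level $K$ is coinitial as an ordinary functor of $\infty$--categories. Applied to the $H$--functor $\underline{\orbit}\inerts_{P/}\rightarrow \underline{\orbit}_{P/}$ associated to $P\in\proper_H$, and using the standard description of fibrewise slices as $(\underline{\sC}_{X/})_K \simeq (\sC_K)_{\res^H_KX/}$, this reduces $H$--coinitiality to the coinitiality of the ordinary functor $(\orbit_K\inerts)_{\res^H_KP/}\rightarrow (\orbit_K)_{\res^H_KP/}$ for every $K\leq H$.

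Combining the two reductions, the biconditional in the statement becomes, at each $K\leq G$ and each $P'\in\proper_K$, the comparison between unique lifting of active morphisms out of $P'$ in $\proper_K$ and coinitiality of $(\orbit_K\inerts)_{P'/}\rightarrow (\orbit_K)_{P'/}$. This is precisely the content of \cite[Lem.\ 6.2]{chuHaugsengI} applied to the nonequivariant pattern morphism $\orbit_K\rightarrow\proper_K$, which finishes the argument. The only real obstacle is the routine bookkeeping needed to identify the levels of $\underline{\orbit}\inerts_{P/}$ and $\underline{\orbit}_{P/}$ with the displayed slice categories, but this is immediate from the pointwise formula for fibrewise slices.
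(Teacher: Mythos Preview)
Your proposal is correct and follows essentially the same approach as the paper: the paper's proof consists of the single sentence that $G$--coinitiality is fibrewise by the dual of \cite[Thm.\ 6.7]{shahThesis}, that \cref{defn:uniqueActiveLifting} is fibrewise by construction, and that one then applies \cite[Lem.\ 6.2]{chuHaugsengI}. Your version simply unfolds this argument in more detail, including the identification of the fibres of the parametrised slices, which the paper leaves implicit.
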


\begin{definition}[``{\cite[Def. 2.6]{chuHaugsengII}}'']
    A $G$\textit{--cartesian pattern} is a $G$--algebraic pattern $\underline{\orbit}$ equipped with a morphism of $G$--algebraic patterns $|-|\colon \underline{\orbit}\rightarrow \underline{\finite}_*$ such that for every $H\leq G$ and $O\in\underline{\orbit}_H$, the induced map $\underline{\orbit}\elementary_{O/}\rightarrow \underline{\finite}_{*,|O|/}\elementary$
    is an equivalence. A \textit{morphism of $G$--cartesian patterns } is a morphism of $G$--algebraic patterns over $\underline{\finite}_*$.
\end{definition}

\begin{construction}
    Recall the \textit{characteristic morphisms} from \cite[Defn. 2.1.6]{nardinShah}, i.e. maps in $\underline{\finite}_*$ that look like
    \begin{center}\small
        \begin{tikzcd}
            U\dar & W \rar[equal]\lar \dar[equal] & W\dar[equal]\\
            G/H & W \lar["w"']\rar[equal] & W
        \end{tikzcd}
    \end{center}
    where $W=G/K$ is a $G$--orbit in $U$. We write such maps as $\chi_{[W\subseteq U]}$. These should be thought of as the analogue of the Segal maps $\rho^i$ (cf. \cite[Nota. 2.0.0.2]{lurieHA}) in the parametrised setting. For any $G$--functor $F\colon\underline{\finite}_*\rightarrow\underline{\sC}$ where $\underline{\sC}$ has finite indexed products, the map $\chi_{[W\subseteq U]}$ induces a canonical map of $H$--objects in $\underline{\sC}$
    \[F([U\rightarrow G/H])\longrightarrow w_*F([W\xrightarrow{=} W])\]
    since we have 
    \begin{equation*}
        \begin{split}
            F(\chi_{[W\subseteq U]}) \in \map_{\underline{\sC}}\Big(F([U\rightarrow G/H]), F([W\xrightarrow{=} W])\Big) &\simeq \map_{\sC_K}\Big(w^*F([U\rightarrow G/H]), F([W\xrightarrow{=} W])\Big)\\
            &\simeq \map_{\sC_H}\Big(F([U\rightarrow G/H]), w_*F([W\xrightarrow{=} W])\Big)
        \end{split}
    \end{equation*}
    by the fact that $\underline{\sC}$ has indexed products.
\end{construction}

\begin{definition}[``{\cite[Def. 2.9]{chuHaugsengII}}'']\label{defn:OMonoids}
    Let $\underline{\orbit}$ be a $G$--cartesian pattern and suppose $\underline{\sC}$ has finite indexed products. A $G$--functor $F\colon \underline{\orbit}\rightarrow\underline{\sC}$ is said to be an $\underline{\orbit}$\textit{--monoid} if for every $[U\rightarrow G/H]\in \underline{\finite}_{*H}$ and $O\in\underline{\orbit}_H$ lying over $[U\rightarrow G/H]$, writing $U = \coprod_{j=1}^nU_j$  for the $G$--orbital decomposition, $u_j\colon U_j\rightarrow G/H$ for the structure maps, and $\chi_{[U_j\subseteq U]}\colon O \rightarrow O_j$ with $O_j$ lying over $[U_j=U_j]$ afforded by the equivalence $\underline{\orbit}\elementary_{O/}\xrightarrow{\simeq} \underline{\finite}_{*,|O|/}\elementary$, the canonical map of $H$--objects in $\underline{\sC}$
    \[F(O) \longrightarrow \prod_{j=1}^nu_{j*}F(O_j)\]
    is an equivalence. By the $G$--cartesian pattern condition, this is equivalent to the following: writing $j\colon \underline{\orbit}\elementary\hookrightarrow \underline{\orbit}\inerts$ for the inclusion,  $F$ is an $\underline{\orbit}$--monoid if and only if the canonical map $F|_{\underline{\orbit}\inerts}\rightarrow j_*j^*(F|_{\underline{\orbit}\inerts})$    is an equivalence. We write $\monoid_{\underline{\orbit}}(\underline{\sC})\subseteq \func_G(\underline{\orbit},\underline{\sC})$ for the full subcategory of $\underline{\orbit}$--monoids in $\underline{\sC}$.
\end{definition}

\begin{remark}
    In the case $\underline{\orbit}=\underline{\finite}_*$, by an easy comparison of definitions with \cite[Def. 5.9]{nardinExposeIV}, we get that $\monoid_{\underline{\finite}_*}(\underline{\sC})\simeq \cmonoid_G(\underline{\sC})$ where $\cmonoid_G(\underline{\sC})$ is in the sense discussed in the body of the paper.
\end{remark}

The exact same argument as in \cite[Prop. 6.3]{chuHaugsengI}, which uses only formalities about Kan extensions such as fully faithfulness of Kan extensions along fully faithful functors \cite[Prop. 10.6]{shahThesis} as well as \cref{lem:coinitialCharacterisationACtiveLifting}, applies here to yield the following:

\begin{lemma}\label{lem:activeLiftMonoidRightKanExtend}
    If $f\colon \underline{\orbit}\rightarrow\underline{\proper}$ is a morphism of $G$--algebraic patterns that has unique fibrewise active lifting, then the right Kan extension $f_*\colon {\func}_G(\underline{\orbit},\underline{\sC})\rightarrow{\func}_G(\underline{\proper},\underline{\sC})$ restricts to $f_*\colon \monoid_{\underline{\orbit}}(\underline{\sC})\rightarrow\monoid_{\underline{\proper}}(\underline{\sC})$.
\end{lemma}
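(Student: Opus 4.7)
The plan is to mirror the Chu--Haugseng argument in our $G$--parametrised setting. Let $F\in\monoid_{\underline{\orbit}}(\underline{\sC})$. By the equivalent reformulation of the monoid condition at the end of \cref{defn:OMonoids}, it suffices to show that the canonical map
\[(f_*F)|_{\underline{\proper}\inerts} \longrightarrow (j^{\underline{\proper}})_*(j^{\underline{\proper}})^*\big((f_*F)|_{\underline{\proper}\inerts}\big)\]
is an equivalence, where $j^{\underline{\proper}}\colon \underline{\proper}\elementary\hookrightarrow \underline{\proper}\inerts$ is the inclusion.

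The first and main step is to identify $(f_*F)|_{\underline{\proper}\inerts}$ with $(f\inerts)_*(F|_{\underline{\orbit}\inerts})$, where $f\inerts\colon \underline{\orbit}\inerts\rightarrow \underline{\proper}\inerts$ is the restriction of $f$. For any $P\in\underline{\proper}_H$, the parametrised pointwise formula for the right Kan extension $f_*$ computes $(f_*F)(P)$ as a $G$--limit over the slice $\underline{\orbit}_{P/}$; by \cref{lem:coinitialCharacterisationACtiveLifting}, the inclusion $\underline{\orbit}\inerts_{P/}\hookrightarrow\underline{\orbit}_{P/}$ is $G$--coinitial, so this $G$--limit coincides with the $G$--limit of $F|_{\underline{\orbit}\inerts}$ over $\underline{\orbit}\inerts_{P/}$, which is precisely $(f\inerts)_*(F|_{\underline{\orbit}\inerts})(P)$. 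This is the essential use of the unique active lifting hypothesis.

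The second step exploits the hypothesis that $F$ is an $\underline{\orbit}$--monoid, giving $F|_{\underline{\orbit}\inerts}\simeq (j^{\underline{\orbit}})_*(F|_{\underline{\orbit}\elementary})$. Because $f$ preserves elementary objects and fibrewise inert morphisms, we have a commuting square of $G$--functors whose top row is $j^{\underline{\orbit}}$, whose bottom row is $j^{\underline{\proper}}$, and whose vertical arrows are the restrictions $f\elementary$ and $f\inerts$. Chaining right Kan extensions along the two paths of this square and combining with the first step yields
\[(f_*F)|_{\underline{\proper}\inerts} \simeq (f\inerts)_*(j^{\underline{\orbit}})_*(F|_{\underline{\orbit}\elementary}) \simeq (j^{\underline{\proper}})_*(f\elementary)_*(F|_{\underline{\orbit}\elementary}). \qquad (\star)\]

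Finally, since $j^{\underline{\proper}}$ is the inclusion of a full subcategory it is fully faithful, so by \cite[Prop. 10.6]{shahThesis} the right Kan extension $(j^{\underline{\proper}})_*$ is fully faithful; equivalently the counit $(j^{\underline{\proper}})^*(j^{\underline{\proper}})_*\Rightarrow \id$ is an equivalence. Applying $(j^{\underline{\proper}})^*$ to $(\star)$ therefore gives $(j^{\underline{\proper}})^*(f_*F)|_{\underline{\proper}\inerts}\simeq (f\elementary)_*(F|_{\underline{\orbit}\elementary})$, and then applying $(j^{\underline{\proper}})_*$ and invoking $(\star)$ once more recovers $(f_*F)|_{\underline{\proper}\inerts}$, yielding the desired Segal--type equivalence. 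The only nontrivial ingredient is the coinitiality identification in step one; the remaining manipulations are formal consequences of $(j^{\underline{\orbit}})_*\dashv (j^{\underline{\orbit}})^*$--style adjunctions, compatibility of right Kan extensions with composition, and fully faithfulness of Kan extension along a fully faithful inclusion.
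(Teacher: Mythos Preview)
Your proposal is correct and takes essentially the same approach as the paper, which simply cites \cite[Prop. 6.3]{chuHaugsengI} and notes that the argument carries over verbatim using only \cref{lem:coinitialCharacterisationACtiveLifting} and the fully faithfulness of right Kan extension along fully faithful functors \cite[Prop. 10.6]{shahThesis}. One minor slip: in your closing sentence the adjunction should read $(j)^*\dashv (j)_*$ rather than $(j)_*\dashv (j)^*$, but this does not affect the argument since you correctly use the counit $(j)^*(j)_*\Rightarrow\id$.
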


Next, we work towards constructing the equivariant generalisation of Lurie's model \cite[Prop. 2.4.1.5]{lurieHA} for the cartesian symmetric monoidal structure for a $G$--category with finite indexed products.

\begin{construction}\label{cons:GammaTimesConstruction}
    Let $\underline{\Gamma}\ttimes$ denote the full subcategory of $\underline{\finite}_*^{\Delta^1}$ spanned by the fibrewise inert morphisms. It will be convenient to denote by $[U\rightarrowtail V]_{G/H} \in \underline{\Gamma}\ttimes_H$ the $H$--object
    \begin{center}\scriptsize
        \begin{tikzcd}
            U\dar & Z\lar\rar["\simeq"]\dar& V \dar\\
            G/H \rar[equal]& G/H \rar[equal]& G/H         
        \end{tikzcd}
    \end{center}
    By \cite[Prop. 3.5 (1)]{shahPaperII}, we know that $\eval_0\colon \underline{\Gamma}\ttimes\longrightarrow \underline{\finite}_*$ is a $G$--cartesian fibration. By the proof of said result, we see that for a $H$--map $f\colon U \rightarrow V$ in $\underline{\finite}_*$, the $H$--functor $f^!\colon \underline{\Gamma}\ttimes_{[V\rightarrow G/H]}\rightarrow \underline{\Gamma}\ttimes_{[U\rightarrow G/H]}$ associated to the $G$--cartesian fibration is given concretely as follows: for $K\leq H$ and $[V\rightarrowtail W]_{G/K}$ a $K$--object in $\underline{\Gamma}\ttimes_{[V\rightarrow G/H]}$, the $K$--object $f^!([V\rightarrowtail W]_{G/K})\in\underline{\Gamma}\ttimes_{[U\rightarrow G/H]}$ is given by the unique dashed fibrewise inert map in 
    \begin{equation}\label{eqn:inertActiveFactorisationPreimageFunctor}
        \begin{tikzcd}
            U \dar[dashed, tail]\rar["f"] & V \dar[tail]\\
            W' \rar[rightsquigarrow] & W
        \end{tikzcd}
    \end{equation}
    obtained by virtue of the unique fibrewise inert--active factorisation.
    
    Observe also that for $[U \rightarrow G/H]\in\underline{\finite}_{*H}$, $\underline{\Gamma}\ttimes_{[U \rightarrow G/H]}$ is a $H$--category such that for any $K\leq H$, the fibre over $H/K$ is given by the opposite of the poset of $K$--subsets of the $H$--set $U$ (compare with \cite[Cons. 2.4.1.4]{lurieHA}): this is because the fibrewise inert maps pick out the orbits in $U$. 
\end{construction}

\begin{construction}[``{\cite[Cons. 2.4.1.4]{lurieHA}}'']\label{cons:overlineCartesianSymmetricMonoidalStructure}
    Applying \cite[Thm. 9.3 (2)]{shahThesis} or \cite[Recoll. 4.3]{shahPaperII} to the $G$--cartesian fibration $\eval_0\colon \underline{\Gamma}\ttimes\longrightarrow \underline{\finite}_*$ and the $G$--cocartesian fibration $\underline{\sC}\times \underline{\finite}_*\rightarrow \underline{\finite}_*$ we obtain a $G$--cocartesian fibration $\overline{\underline{\sC}}\ttimes\rightarrow\underline{\finite}_*$. By \cite[Thm. 4.9]{shahPaperII}, this construction satisfies a universal property which implies in particular that 
    \begin{equation}\label{eqn:universalPropertyOverlineCartesian}
        \underline{\func}_{/\underline{\finite}_*}(\underline{\finite}_*,\underline{\overline{\sC}}\ttimes)\simeq \underline{\func}(\underline{\Gamma}\ttimes,\underline{\sC})
    \end{equation}
    
    Furthermore, by \cite[Prop. 9.7]{shahThesis}, we have
    \small\[\underline{\overline{\sC}}\ttimes_{[U\rightarrow G/H]}\simeq \underline{\func}_{[U\rightarrow G/H]}\big(\underline{\Gamma}\ttimes_{[U\rightarrow G/H]}, (\underline{\sC}\times\underline{\finite}_*)_{[U\rightarrow G/H]}\big)\simeq \underline{\func}(\underline{\Gamma}\ttimes_{[U\rightarrow G/H]}, \underline{\sC}_H)\]\normalsize
    If $\underline{\sC}$ has all finite indexed products, we define $\underline{\sC}\ttimes$ to be the full subcategory of $\overline{\underline{\sC}}\ttimes$ whose objects over $[U\rightarrow G/H]$ are the $H$--functors $F\colon \underline{\Gamma}\ttimes_{[U\rightarrow G/H]}\rightarrow \underline{\sC}_H$ such that for every $K\leq H$ and $K$--object $[U\rightarrowtail V]_{G/K}$ with $G$--orbit decomposition $V = \coprod_{j=1}^n V_j$ and structure maps $v_j\colon V_j\rightarrow G/K$, the map    \begin{equation}\label{eqn:conditionCartSymmMonStructure}
        F([U\rightarrowtail V]_{G/K}) \longrightarrow \prod_{j=1}^n{v_{j*}}F([U\rightarrowtail V\rightarrowtail V_j]_{V_j})
    \end{equation}
    induced by the characteristic maps $\chi_{[V_j\subseteq V]}\colon V\rightarrowtail V_j$ is an equivalence. 
    
    Now observe that, writing $U= \coprod_jU_j$ for the $G$--orbital decomposition with structure maps $u_j\colon U_j\rightarrow G/H$, we have the full subcategory $\coprod_j{u_{j!}}\underline{\ast}\subseteq \underline{\Gamma}\ttimes_{[U\rightarrow G/H]}$ consisting of the single $G$--orbits of $U$. A straightforward unwinding of definitions show that $\underline{\sC}\ttimes_{[U\rightarrow G/H]}\subseteq\underline{\overline{\sC}}\ttimes_{[U\rightarrow G/H]}$ is identified with the full subcategory 
    \[\prod_j{u_{j*}}u_j^*\underline{\sC} \simeq \underline{\func}(\coprod_j{u_{j!}}\underline{\ast}, \underline{\sC})\subseteq \underline{\func}(\underline{\Gamma}\ttimes_{[U\rightarrow G/H]},\underline{\sC}_H)\]
    where the inclusion is by right Kan extension (compare with the proof of \cite[Prop. 2.4.1.5 (4)]{lurieHA}). This in particular means that we have an identification ${\sC}\ttimes_{[U\rightarrow G/H]}\simeq \prod_j\sC_{U_j}$.
\end{construction}

\begin{remark}\label{rmk:concreteDescriptionCocartesianMorphism}
    By \cite[Recoll. 4.3]{shahPaperII} and using that the cocartesian pushforward functors to the constant $G$--cocartesian fibration $\underline{\sC}\times \underline{\finite}_*\rightarrow\underline{\finite}_*$ are just the identity functors, we see that for a morphism of $H$--objects $f\colon U \rightarrow V$ in $\underline{\finite}_{*H}$, the associated cocartesian pushforward functor on the $G$--cocartesian fibration $\underline{\overline{\sC}}\ttimes\rightarrow\underline{\finite}_*$  looks like
    \[\underline{\func}_H(\underline{\Gamma}\ttimes_{[U\rightarrow G/H]}, \underline{\sC}_H)\longrightarrow \underline{\func}_H(\underline{\Gamma}\ttimes_{{[V\rightarrow G/H]}}, \underline{\sC}_H)\quad ::\quad F \mapsto F\circ f^!\]
    where $f^! \colon \underline{\Gamma}\ttimes_{{[V\rightarrow G/H]}}\rightarrow \underline{\Gamma}\ttimes_{[U\rightarrow G/H]}$ is the $H$--functor described in \cref{cons:GammaTimesConstruction}.
\end{remark}

\begin{proposition}[``{\cite[Prop. 2.4.1.5]{lurieHA}}'']\label{prop:GCartesianSymmMonCat}
    Let $\underline{\sC}$ be a $G$--category with finite indexed products. The composite $\underline{\sC}\ttimes\subseteq \overline{\underline{\sC}}\ttimes\rightarrow\underline{\finite}_*$ is a $G$--symmetric monoidal structure on the $G$--category $\underline{\sC}$.
\end{proposition}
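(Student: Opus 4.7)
The plan is to adapt Lurie's argument from \cite[Prop. 2.4.1.5]{lurieHA}, leveraging the universal property of $\overline{\underline{\sC}}\ttimes$ and the explicit formula for cocartesian pushforwards from \cref{rmk:concreteDescriptionCocartesianMorphism}. Three items need to be verified: (a) $\underline{\sC}\ttimes \to \underline{\finite}_*$ is a $G$--cocartesian fibration, which amounts to showing that the full subcategory $\underline{\sC}\ttimes \subseteq \overline{\underline{\sC}}\ttimes$ is closed under the cocartesian pushforwards of $\overline{\underline{\sC}}\ttimes \to \underline{\finite}_*$; (b) the $G$--symmetric monoidal Segal condition, saying that for each $[U \to G/H]$ with $G$--orbit decomposition $U = \coprod_j U_j$ and structure maps $u_j: U_j \to G/H$, the characteristic maps induce an equivalence
\[
\sC\ttimes_{[U \to G/H]} \xlongrightarrow{\simeq} \prod_j u_{j*}\sC\ttimes_{[U_j = U_j]};
\]
and (c) the fiber over $[G/H = G/H]$ recovers $\sC_H$, so the underlying $G$--category is indeed $\underline{\sC}$.

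Parts (b) and (c) should follow almost directly from the setup in \cref{cons:overlineCartesianSymmetricMonoidalStructure}. Indeed, the condition \cref{eqn:conditionCartSymmMonStructure} already identifies $\sC\ttimes_{[U \to G/H]}$ with the essential image of right Kan extension from the full subcategory $\coprod_j u_{j!}\underline{\ast} \subseteq \underline{\Gamma}\ttimes_{[U \to G/H]}$ of single $G$--orbits, yielding $\sC\ttimes_{[U \to G/H]} \simeq \prod_j \sC_{U_j}$. Specialising to the case $[U_j = U_j]$, the single-orbit subposet reduces to a point, so $\sC\ttimes_{[U_j = U_j]} \simeq \sC_{U_j}$; in particular, taking $U_j = G/H$ gives (c). Under these identifications, the map in (b) becomes the tautological equivalence produced by restricting along the inclusion of the single-orbit subposet.

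The substantive work is in (a). Here I would use the concrete description in \cref{rmk:concreteDescriptionCocartesianMorphism}: the pushforward along $f\colon U \to V$ in $\underline{\finite}_{*H}$ is $F \mapsto F \circ f^!$, where $f^!$ is defined object-wise by the inert-active factorisation of \cref{eqn:inertActiveFactorisationPreimageFunctor}. Given a fibrewise inert $[V \rightarrowtail W]_{G/K}$ with orbit decomposition $W = \coprod_j W_j$ and structure maps $w_j\colon W_j \to G/K$, I would factor the composite $U \xrightarrow{f} V \rightarrowtail W$ as an inert $[U \rightarrowtail W']_{G/K}$ followed by an active $W' \rightsquigarrow W$, and similarly for each $W_j$ producing $[U \rightarrowtail W'_j]_{W_j}$. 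The key compatibility to verify is that the orbit-wise factorisations of $W'$ are themselves obtained by pulling the inert part back along the characteristic maps $W \rightarrowtail W_j$, so that after applying $F$ the Segal decomposition for $F \circ f^!$ at $[V \rightarrowtail W]_{G/K}$ reduces term-by-term to the Segal decomposition for $F$ at the corresponding $[U \rightarrowtail W'_j]_{W_j}$, which holds by assumption.

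The main obstacle I anticipate is precisely this compatibility chase. In Lurie's non-parametrised case, $f^!$ is literally the preimage map on subsets, and compatibility with the orbit-partition is transparent; in the parametrised setting one has only the unique-up-to-contractibility inert-active factorisation in $\underline{\finite}_*$, so a careful use of the $G$--factorisation system of \cite[\S3]{shahPaperII} will be needed to extract the relevant commuting diagrams. Once this is in place, assembling (a), (b), and (c) delivers the desired $G$--symmetric monoidal structure in the sense of \cite[\S3]{nardinThesis}.
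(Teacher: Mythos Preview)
Your proposal is correct and follows essentially the same route as the paper: parts (b) and (c) are read off from the right Kan extension description at the end of \cref{cons:overlineCartesianSymmetricMonoidalStructure}, and for (a) the paper carries out exactly the compatibility chase you anticipate, computing $f^!$ via the inert--active factorisation, writing each $W'_j=f^{-1}W_j$ in its orbital decomposition $\coprod_i W_{ji}$, and then comparing the two Segal decompositions of $F$ through a commuting diagram using $w_{ji*}\simeq w_{j*}\alpha_{ji*}$. The only refinement you should make explicit is that the Segal condition for $F$ must be invoked twice---once at $[U\rightarrowtail \coprod_{j,i}W_{ji}]_{G/K}$ and once at each $[U\rightarrowtail f^{-1}W_j]_{W_j}$---since $W'_j$ need not be a single orbit.
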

\begin{proof}
    By the definition of $G$--symmetric monoidal categories \cite[Def. 2.1.7 and Def. 2.2.3]{nardinShah}, first note that it would suffice to show that the composite is a $G$--cocartesian fibration and that the characteristic maps associated to any orbital decomposition $U = \coprod_{j=1}^nU_j$ induce equivalences ${\sC}\ttimes_{[U\rightarrow G/H]}\xrightarrow{\simeq}\prod_{j=1}^n\sC_{U_j}$ since these two conditions together ensure that \cite[Def. 2.1.7 (3)]{nardinShah} holds. The second point has been dealt with at the end of \cref{cons:overlineCartesianSymmetricMonoidalStructure} and so we are left to show that the composite is indeed a $G$--cocartesian fibration. 
    
    More precisely, we need to show that the condition \cref{eqn:conditionCartSymmMonStructure} is stable under the pushforward functors described in \cref{rmk:concreteDescriptionCocartesianMorphism}. To this end, suppose $F\in \underline{\func}_H(\underline{\Gamma}\ttimes_{[U\rightarrow G/H]}, \underline{\sC}_H)$ satisfies the condition \cref{eqn:conditionCartSymmMonStructure}. We need to show that for any $K\leq H$ and any $[V\rightarrowtail W]_{G/K}\in \underline{\Gamma}\ttimes_{{[V\rightarrow G/H]}}$ with structure maps $w_j\colon W_j \rightarrow G/K$, the map
    \[Ff^!([V\rightarrowtail W]_{G/K}) \longrightarrow \prod_{j=1}^n{w_{j*}}Ff^!([V\rightarrowtail W\rightarrowtail W_j]_{W_j})\]
    is an equivalence. 
    
    To set up notation, writing $f^{-1}W_j\subseteq U$ for the preimage (which might be empty) and $f^{-1}W_j= \sqcup_{i=1}^{n_j}W_{ji}$ its orbital decomposition, we know from \cref{eqn:inertActiveFactorisationPreimageFunctor} that $f^!$ is computed as the left inert map in the left square in
    \begin{center}
        \begin{tikzcd}
            U \dar[ tail]\rar["f"] & V \dar[tail] &&& W_{ji}\rar["\alpha_{ji}", rightsquigarrow]\ar[dr, "w_{ji}"'] & W_j\dar["w_j"] \\
            \sqcup_jf^{-1}W_j \rar[rightsquigarrow, "\alpha"] & W &&& & G/K
        \end{tikzcd}
    \end{center}
    together with the associated  structure maps notated on the right triangle. Using that ${w_{ji*}}\simeq {w_{j*}}{\alpha_{ji*}}$ by composability of right Kan extensions,  we now simply contemplate the following commuting diagram
    \begin{center}\footnotesize
        \begin{tikzcd}
            Ff^!([V\rightarrowtail W]_{G/K}) \rar\ar[dd, equal] & \prod_{j=1}^n{w_{j*}}Ff^!([V\rightarrowtail W_j]_{W_j}) = \prod_{j=1}^n{w_{j*}}F([U\rightarrowtail \sqcup_{i=1}^{n_j}W_{ji}]_{W_j})\dar["\simeq"]\\
            & \prod_{j=1}^n{w_{j*}}\prod_{i=1}^{n_j}{\alpha_{ji*}}F([U\rightarrowtail W_{ji}]_{W_{ji}})\dar[equal]\\
            F([U\rightarrowtail \sqcup_j\sqcup_{i=1}^{n_j}W_{ji}]_{G/K}) \rar["\simeq"] &  \prod_{j=1}^n\prod_{i=1}^{n_j}{w_{ji*}}F([U\rightarrowtail W_{ji}]_{W_{ji}})
        \end{tikzcd}
    \end{center}
    where the equivalences are by our hypothesis on $F$. Hence the top horizontal map is also an equivalence, as desired.
\end{proof}

Our next goal is to show that $\underline{\Gamma}\ttimes$ can be endowed with the structure of a $G$--cartesian pattern and to show in \cref{lem:CHII.5.14} that its monoid theory is equivalent to that associated to $\underline{\finite}_*$.

\begin{lemma}
    There is a natural factorisation system on $\underline{\Gamma}\ttimes\subseteq \underline{\finite}_*^{\Delta^1}$ where the fibrewise inert (resp. fibrewise active) morphisms are those which are pointwise fibrewise inert (resp. fibrewise active).
\end{lemma}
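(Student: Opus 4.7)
The plan is to produce the factorisation system in two stages: first, endow the arrow category $\underline{\finite}_*^{\Delta^1}$ with the pointwise inert--active factorisation system, and second, check that this restricts to the full subcategory $\underline{\Gamma}\ttimes$ spanned by the fibrewise inert morphisms. Fibrewise over $\orbit_G$, the first stage is the standard construction of a factorisation system on the arrow category: given a commutative square with horizontals $\phi_0, \phi_1$ and verticals $a, b$, I factor each horizontal as $\phi_i = \rho_i \lambda_i$ with $\lambda_i$ inert and $\rho_i$ active, and produce a middle vertical $\gamma$ as the unique diagonal filler in the resulting lifting problem, using the orthogonality of the inert--active factorisation system in $\underline{\finite}_*$. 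Closure under composition and equivalences of the pointwise classes, together with $G$--fibrewiseness, follows immediately from the corresponding properties in $\underline{\finite}_*$.

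The crux of the argument is the second stage, namely verifying that when $a$ and $b$ are inert (so that the square genuinely lies in $\underline{\Gamma}\ttimes$) the middle vertical $\gamma$ is again inert, ensuring that the factorisation lives in $\underline{\Gamma}\ttimes$. For this, I would apply the factorisation system in $\underline{\finite}_*$ separately to $b\rho_0$, writing $b\rho_0 = \rho' \ell$ with $\ell$ inert and $\rho'$ active. Then $b\phi_0 = \rho'(\ell\lambda_0)$ provides an inert--active factorisation of $b\phi_0 = \phi_1 a = \rho_1(\lambda_1 a)$, where $\ell\lambda_0$ and $\lambda_1 a$ are both inert as composites of inerts. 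Uniqueness of inert--active factorisations then forces $\rho' = \rho_1$ and $\ell \lambda_0 = \lambda_1 a$ under the canonical identification of intermediate objects, so $\ell$ satisfies both defining properties of the diagonal filler, whence $\ell = \gamma$ by the uniqueness of $\gamma$. In particular $\gamma$ is inert.

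Finally, that these classes are closed under the restriction functors on $\orbit_G$ is automatic: the pointwise classes on $\underline{\Gamma}\ttimes$ are defined in terms of the already $G$--equivariant inert and active classes of $\underline{\finite}_*$, and hence inherit $G$--fibrewiseness for free. The only nontrivial step is thus the uniqueness argument above for the middle vertical, which I expect to be the main obstacle; everything else follows formally from the standard theory of factorisation systems on arrow categories applied fibrewise.
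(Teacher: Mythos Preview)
Your proposal is correct and follows essentially the same approach as the paper, which simply cites \cite[Lem.~5.8]{chuHaugsengII} and notes that the argument there only uses composability of inert morphisms and uniqueness of the inert--active factorisation. You have spelled out precisely that argument: the key step identifying the middle vertical $\gamma$ with the inert part $\ell$ of the factorisation of $b\rho_0$ is exactly the uniqueness manoeuvre the paper is alluding to.
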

\begin{proof}
    The exact same argument of \cite[Lem. 5.8]{chuHaugsengII} works here since that argument only uses composability of inert morphisms and uniqueness of the fibrewise inert--active factorisations, both of which are true in $\underline{\finite}_*$.
\end{proof}

\begin{construction}[``{\cite[Rmk. 5.11]{chuHaugsengII}}'']
    We endow $\underline{\Gamma}\ttimes$ with a $G$--algebraic pattern structure using the factorisation system above with $[G/H=G/H]_{G/H}$ for all $H\leq G$ as the elementary objects. Moreover, it is not hard to see that $\eval_1\colon \underline{\Gamma}\ttimes\rightarrow \underline{\finite}_*$ endows $\underline{\Gamma}\ttimes$ with a $G$--cartesian pattern structure. To wit, for any $H$--object $[U\rightarrowtail V]_{G/H}$ in $\underline{\Gamma}\ttimes$, any inert map to an elementary object $[W=W]_W$
    \begin{center}
        \begin{tikzcd}
            U\dar[tail]\rar[tail] & W\dar[equal] \\
            V \rar[tail] & W
        \end{tikzcd}
    \end{center}
    is totally determined by the inert map $V \rightarrowtail W$, and hence the map $(\underline{\Gamma}\ttimes)\elementary_{[U\rightarrowtail V]_{G/H}/}\xrightarrow{\eval_1} \underline{\finite}\elementary_{*, V/}$  in the definition of a $G$--cartesian structure is indeed an equivalence.
\end{construction}


\begin{construction}\label{lem:uniqueLiftingActive
Morphism}
    Let $i\colon \underline{\finite}_*\hookrightarrow \underline{\Gamma}\ttimes$ be the functor that takes a finite $H$--set $U$, for any $H\leq G$, to $[U\xrightarrow{=}U]_{G/H}$. In other words, it is the right Kan extension along the inclusion $\{1\} \hookrightarrow \Delta^1$ and hence is fully faithful by \cite[Prop. 10.6]{shahThesis}. This is immediately seen to be a morphism of $G$--cartesian patterns. By the same argument as in \cite[Rmk. 5.13]{chuHaugsengII}, which uses only the uniqueness of the fibrewise inert--active factorisation in $\underline{\Gamma}\ttimes$, we see that $i$ has unique lifting of fibrewise active morphisms in the sense of \cref{defn:uniqueActiveLifting}.
\end{construction}

Now, note that a functor $M\colon \underline{\Gamma}\ttimes\rightarrow\underline{\sC}$ is a $\underline{\Gamma}\ttimes$--monoid in the sense of \cref{defn:OMonoids} if and only if for any $H$--object $[U\rightarrowtail W]_{G/H}$ of $\underline{\Gamma}\ttimes$ with orbital decomposition $W = \coprod_{j=1}^nW_j$ and structure maps $w_j\colon W_j\rightarrow G/H$, the canonical map of $H$--objects
\begin{equation}\label{eqn:monoidCanonicalMap}
    M([U\rightarrowtail W]_{G/H})\longrightarrow \prod^n_{j=1}{w_{j*}}M([W_j=W_j]_{W_j})
\end{equation}
is an equivalence. Taking this description as well as \cref{lem:activeLiftMonoidRightKanExtend} as the appropriate replacements, the proof of \cite[Lem. 5.14]{chuHaugsengII} now works in our setting \textit{mutatis mutandis} to yield:

\begin{lemma}\label{lem:CHII.5.14}
    Let $\underline{\sC}$ have finite indexed products. The adjunction $i^* \colon \monoid_{\underline{\Gamma}\ttimes}(\underline{\sC})\rightleftharpoons \monoid_{\underline{\finite}_*}(\underline{\sC}) : i_*$
    is an equivalence.
\end{lemma}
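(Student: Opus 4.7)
Following the template of \cite[Lem. 5.14]{chuHaugsengII}, the plan is to show that both the unit and counit of $i^*\dashv i_*$ become equivalences after restriction to the subcategories of monoids. First I would verify that both functors do restrict: that $i^*$ preserves monoids follows by inspection, since the $\underline{\finite}_*$--monoid condition on $i^*F$ at a finite pointed $G$--set $V$ is exactly the $\underline{\Gamma}\ttimes$--monoid condition on $F$ at the object $[V=V]_{G/H}$, whose characteristic inert maps are precisely the maps $[V=V]_{G/H}\to [V_j=V_j]_{V_j} = i(V_j)$. That $i_*$ preserves monoids is exactly \cref{lem:activeLiftMonoidRightKanExtend} applied to $i$, whose unique lifting of fibrewise active morphisms was recorded in \cref{lem:uniqueLiftingActiveMorphism}.

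Since $i\colon \underline{\finite}_*\hookrightarrow \underline{\Gamma}\ttimes$ is fully faithful, the right Kan extension $i_*$ is also fully faithful by \cite[Prop. 10.6]{shahThesis}, so the counit $i^*i_*\Rightarrow \id$ is an equivalence on the entire functor category and in particular after restriction to monoids. The nontrivial step will be to show that the unit $F\to i_*i^*F$ is an equivalence for every $F\in\monoid_{\underline{\Gamma}\ttimes}(\underline{\sC})$.

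I would verify this pointwise using the parametrised pointwise formula for right Kan extensions. At an object $[U\rightarrowtail W]_{G/H}$ with orbital decomposition $W=\coprod_{j=1}^n W_j$ and structure maps $w_j\colon W_j\to G/H$, the value $(i_*i^*F)([U\rightarrowtail W]_{G/H})$ is computed as a limit of $F\circ i$ indexed over the appropriate $H$--comma category of morphisms from $[U\rightarrowtail W]$ into $i(\underline{\finite}_*)$. The combinatorial core of the argument will be to show that the characteristic maps $\chi_{[W_j\subseteq W]}\colon [U\rightarrowtail W]\to i(W_j)=[W_j=W_j]_{W_j}$ determine a $G$--coinitial subdiagram. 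Exactly as in the nonequivariant case treated in \cite[Lem. 5.14]{chuHaugsengII}, this should follow from the uniqueness of the fibrewise inert--active factorisation in $\underline{\finite}_*$: any morphism $[U\rightarrowtail W]\to i(V)$ factors through one of the $\chi_{[W_j\subseteq W]}$. The upgrade from fibrewise to $G$--coinitiality would go through the dual of \cite[Thm. 6.7]{shahThesis}.

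Granted this coinitiality reduction, the limit will collapse to $\prod_{j=1}^n w_{j*}F([W_j=W_j]_{W_j})$, and the unit map at $[U\rightarrowtail W]_{G/H}$ will be identified with the canonical map of \cref{eqn:monoidCanonicalMap}, which is an equivalence by the $\underline{\Gamma}\ttimes$--monoid hypothesis on $F$. The hard part, I expect, is less any new conceptual input than the bookkeeping required to set up the pointwise formula and the $G$--coinitiality statement in the language of $G$--(co)limits of \cite{shahThesis}, following the nonequivariant blueprint in \cite[Lem. 5.14]{chuHaugsengII}.
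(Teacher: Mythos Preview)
Your proposal is correct and follows essentially the same approach as the paper, which simply defers to \cite[Lem.~5.14]{chuHaugsengII} with the indicated replacements (the description \cref{eqn:monoidCanonicalMap} and \cref{lem:activeLiftMonoidRightKanExtend}). One small sharpening for the unit computation: rather than exhibiting the orbits $\{W_j\}$ as a $G$--coinitial subdiagram, note that the map $[U\rightarrowtail W]_{G/H}\to i(W)=[W=W]_{G/H}$ is $H$--initial in the comma category (this is exactly what the inert--active factorisation buys you, since any $[U\rightarrowtail W]\to i(V)$ factors uniquely through $i(W)$, not through a single orbit), so $i_*i^*F([U\rightarrowtail W])\simeq F([W=W])$ and the unit is identified with $F([U\rightarrowtail W])\to F([W=W])$, an equivalence since both sides satisfy \cref{eqn:monoidCanonicalMap}.
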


Lastly, we relate the notion of monoids explored so far with that of algebras, which we recall now. 
\begin{recollection}\label{recollection:algebraCategories}
    Let $\underline{\sC}^{\underline{\otimes}}\rightarrow\underline{\finite}_*$ be a $G$--symmetric monoidal category (cf. for instance \cite[Def. 2.2.3]{nardinShah}). Then the category of $G$--commutative algebras $\calg_G(\underline{\sC}^{\underline{\otimes}})$ is defined to be $\func_{/\underline{\finite}_*}\inerts(\underline{\finite}_*,\underline{\sC}^{\underline{\otimes}})$, that is, the category of $\underline{\finite}_*$--sections $\underline{\finite}_*\rightarrow \underline{\sC}^{\underline{\otimes}}$ which send inert morphisms to $\underline{\finite}_*$--cocartesian morphisms (cf. \cite[Def. 2.2.1]{nardinShah} for a definition). Observe now that any inert morphism in $\underline{\finite}_*$ (cf. \cite[Def. 2.1.3]{nardinShah})
    \begin{center}\scriptsize
        \begin{tikzcd}
            U\dar & Z \rar["\simeq"]\lar \dar & W\dar\\
            G/H & G/K \lar\rar[equal] & G/K
        \end{tikzcd}
    \end{center}
    can be factored as the composition of the two inert morphisms
    \begin{center}\scriptsize
        \begin{tikzcd}
            U\times_{G/H}G/K\dar & Z\lar\rar["\simeq"]\dar& W \ar[rrd, phantom, "\circ"]\dar&& U\dar & U\times_{G/H}G/K \rar[equal]\lar \dar & U\times_{G/H}G/K\dar\\
            G/K \rar[equal]& G/K \rar[equal]& G/K && G/H & G/K \lar\rar[equal] & G/K
        \end{tikzcd}
    \end{center}
    where the left one is fibrewise, i.e. it lives in the fibre over $G/K$. Since by definition any $G$--functor $A\colon \underline{\finite}_*\rightarrow \underline{\sC}^{\underline{\otimes}}$ must send the inert morphisms of the type on the right to $\underline{\finite}_*$--cocartesian morphisms, the requirement for $A$ to be in $ \calg_G(\underline{\sC}^{\underline{\otimes}})$ can equivalently be formulated as sending the fibrewise inert morphisms to $\underline{\finite}_*$--cocartesian morphisms.
\end{recollection}

The following lemma, which is an immediate modification of \cite[Lem. 5.15]{chuHaugsengII}, will be the bridge connecting the theory of monoids and that of algebras. 

\begin{lemma}\label{lem:CHII.5.15}
    Under the natural equivalence $\underline{\func}_{/\underline{\finite}_*}(\underline{\finite}_*,\underline{\overline{\sC}}\ttimes)\simeq \underline{\func}(\underline{\Gamma}\ttimes,\underline{\sC})$ from \cref{eqn:universalPropertyOverlineCartesian},  the full subcategory $\monoid_{\underline{\Gamma}\ttimes}(\underline{\sC})$ from the right hand side is identified with $\func_{/\underline{\finite}_*}\inerts(\underline{\finite}_*,\underline{\sC}\ttimes)$ from the left hand side.
\end{lemma}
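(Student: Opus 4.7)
The plan is to adapt the proof of \cite[Lem. 5.15]{chuHaugsengII} verbatim, decomposing the condition ``$\sigma$ is a section sending inert maps to cocartesian'' into two conditions and matching them precisely with the $\underline{\Gamma}\ttimes$--monoid condition on the corresponding $M$. First, under the equivalence \cref{eqn:universalPropertyOverlineCartesian}, a $G$--functor $M\colon\underline{\Gamma}\ttimes\to\underline{\sC}$ corresponds to a section $\sigma_M\colon\underline{\finite}_*\to\underline{\overline{\sC}}\ttimes$ whose fibrewise value at $[U\to G/H]$ is the $H$--functor $M|_{\underline{\Gamma}\ttimes_{[U\to G/H]}}\colon\underline{\Gamma}\ttimes_{[U\to G/H]}\to\underline{\sC}_H$.

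The section $\sigma_M$ lies in $\func_{/\underline{\finite}_*}^{\mathrm{int}}(\underline{\finite}_*,\underline{\sC}\ttimes)$ if and only if the following two conditions hold: (a) for each $[U\to G/H]$, the $H$--functor $M|_{\underline{\Gamma}\ttimes_{[U\to G/H]}}$ satisfies \cref{eqn:conditionCartSymmMonStructure} (so that $\sigma_M$ factors through $\underline{\sC}\ttimes\subseteq\underline{\overline{\sC}}\ttimes$); and (b) for each fibrewise inert morphism $f\colon[U\to G/H]\to[V\to G/H]$ in $\underline{\finite}_{*H}$, the canonical map of $H$--functors $M|_{\underline{\Gamma}\ttimes_{[U\to G/H]}}\circ f^!\to M|_{\underline{\Gamma}\ttimes_{[V\to G/H]}}$ is an equivalence. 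The latter uses the explicit description of the cocartesian pushforward from \cref{rmk:concreteDescriptionCocartesianMorphism} and the reduction to fibrewise inerts from \cref{recollection:algebraCategories}.

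The heart of the argument is to show that (a)+(b) is jointly equivalent to the $\underline{\Gamma}\ttimes$--monoid condition \cref{eqn:monoidCanonicalMap} on $M$. For the forward direction, given $[U\rightarrowtail W]_{G/H}$ with orbital decomposition $W=\coprod_j W_j$ and structure maps $w_j\colon W_j\to G/H$, first apply (b) to the fibrewise inert $U\rightarrowtail W$ to get $M([U\rightarrowtail W]_{G/H})\simeq M([W=W]_{G/H})$, noting that $f^!([W=W]_{G/H}) = [U\rightarrowtail W]_{G/H}$ because the inert-active factorisation of an inert map is trivial. Next, apply (a) at $[W\to G/H]$ to the object $[W=W]_{G/H}$ with its orbital decomposition, obtaining $M([W=W]_{G/H})\simeq \prod_j w_{j*}M([W\rightarrowtail W_j]_{W_j})$. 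Finally, apply (b) at level $W_j$ to each inert $W\rightarrowtail W_j$ to identify $M([W\rightarrowtail W_j]_{W_j})\simeq M([W_j=W_j]_{W_j})$. Composing these equivalences gives the monoid condition.

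For the reverse direction, the monoid condition directly specialises to yield (a): for $[U\rightarrowtail V]_{G/K}$ with $V=\coprod V_j$, applying the monoid condition to both sides of \cref{eqn:conditionCartSymmMonStructure} reduces each to a product over the elementary pieces $[V_j=V_j]_{V_j}$. Similarly (b) follows because for a fibrewise inert $f\colon U\rightarrowtail V$ and any $[V\rightarrowtail W]_{G/K}$, composing the inerts shows that $f^!([V\rightarrowtail W]_{G/K})=[U\rightarrowtail W]_{G/K}$, and the monoid condition reduces both $M([V\rightarrowtail W]_{G/K})$ and $M([U\rightarrowtail W]_{G/K})$ to the same product of elementary evaluations. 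The principal obstacle, as in \cite{chuHaugsengII}, will be in bookkeeping the changes of ambient level (from $G/H$ down to the various $W_j = G/K_j$) together with the cocartesian pushforward action, and in verifying that the inert-active factorisations produce the identifications used above; this is where the hypothesis that $\underline{\sC}$ admits finite indexed products and the $G$--cartesian pattern structure on $\underline{\Gamma}\ttimes$ together do all the work.
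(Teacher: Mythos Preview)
Your proposal is correct and follows essentially the same approach as the paper: translate membership in $\func_{/\underline{\finite}_*}\inerts(\underline{\finite}_*,\underline{\sC}\ttimes)$ into the pair of conditions (a) factoring through $\underline{\sC}\ttimes$ and (b) cocartesianness over fibrewise inerts, and then argue that (a)+(b) is equivalent to the $\underline{\Gamma}\ttimes$--monoid condition. The paper packages the latter equivalence into two commuting triangles and then defers to \cite[Lem.~5.15]{chuHaugsengII} verbatim, whereas you spell out both directions as explicit chains of equivalences; these are the same argument in slightly different dress.
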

\begin{proof}
    By \cref{recollection:algebraCategories}, a functor $F\colon \underline{\finite}_*\rightarrow\overline{\underline{\sC}}\ttimes$ over $\underline{\finite}_*$ lies in $\func_{/\underline{\finite}_*}\inerts(\underline{\finite}_*,\underline{\sC}\ttimes)$ if and only if $F$ factors through the full subcategory $\underline{\sC}\ttimes$ and $F$ takes fibrewise inert morphisms to cocartesian morphisms. We can translate these requirements in terms of the corresponding functor $F'\colon \underline{\Gamma}\ttimes\rightarrow\underline{\sC}$ as the following pair of conditions: for any $H\leq G$ and $H$--object $[U\rightarrowtail W]_{G/H}$ in $\underline{\Gamma}\ttimes$,
    \begin{enumerate}
        \item Writing the orbital decomposition $W = \coprod_{j=1}^nW_j$ with structure maps $w_j\colon W_j\rightarrow G/H$, the canonical map
        \[F'([U\rightarrowtail W]_{G/H})\longrightarrow \prod^n_{j=1}{w_{j*}}F'([U\rightarrowtail W\rightarrowtail W_j]_{W_j})\]
        is an equivalence.
        \item For every $H$--inert map $Y\rightarrowtail U$ in $\underline{\finite}_*$, the morphism
        \[F'([Y\rightarrowtail U \rightarrowtail W]_{G/H})\rightarrow F'([U \rightarrowtail W]_{G/H})\]
        is an equivalence. This reinterpretation of fibrewise inerts being sent to cocartesian morphisms is again by \cref{rmk:concreteDescriptionCocartesianMorphism}.
    \end{enumerate}

    On the other hand, $F'$ is a monoid if for any $H\leq G$ and $H$--object $[U\rightarrowtail W]_{G/H}$ in $\underline{\finite}_*$ and using the notations above, the map \cref{eqn:monoidCanonicalMap}  is an equivalence. To see that this is equivalent to the first pair of conditions, observe that we have the following commuting triangles
    \begin{center}\footnotesize
        \begin{tikzcd}
            F'([U\rightarrowtail W]_{G/H}) \ar[rr]\ar[dr] && \prod^n_{j=1}{w_{j*}}F'([U\rightarrowtail W_j]_{W_j})\ar[dl]\\
            & \prod^n_{j=1}{w_{j*}}F'([W_j=W_j]_{W_j})
        \end{tikzcd}
    \end{center}
    \begin{center}\footnotesize
        \begin{tikzcd}
            F'([Y\rightarrowtail U \rightarrowtail W]_{G/H}) \ar[rr]\ar[dr] && F'([U \rightarrowtail W]_{G/H})\ar[dl]\\
            & \prod^n_{j=1}{w_{j*}}F'([W_j=W_j]_{W_j})
        \end{tikzcd}
    \end{center}
    With the ingredients set up, the rest of the proof of \cite[Lem. 5.15]{chuHaugsengII} now goes through word--for--word.
\end{proof}


We may now  deduce the desired equivalence: 

\begin{proposition}\label{prop:CMon=CAlg}
    Let $\underline{\sC}$ be a $G$--category with finite indexed products. There is a canonical equivalence $\cmonoid_G(\underline{\sC})\simeq \calg_G(\underline{\sC}\ttimes)$.
\end{proposition}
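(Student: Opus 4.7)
The plan is simply to chain together the equivalences established throughout the appendix, so at this point the proof is essentially a three-step composition.

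First, I would appeal to the remark immediately following \cref{defn:OMonoids} to identify $\cmonoid_G(\underline{\sC}) \simeq \monoid_{\underline{\finite}_*}(\underline{\sC})$, which is just a matter of comparing definitions with \cite[Def. 5.9]{nardinExposeIV}. Next, I would invoke \cref{lem:CHII.5.14} (applied via the $G$-cartesian pattern morphism $i \colon \underline{\finite}_* \hookrightarrow \underline{\Gamma}^{\underline{\times}}$ of \cref{lem:uniqueLiftingActive Morphism}, which has unique lifting of fibrewise active morphisms) to upgrade this to an equivalence $\monoid_{\underline{\finite}_*}(\underline{\sC}) \simeq \monoid_{\underline{\Gamma}^{\underline{\times}}}(\underline{\sC})$ via the right Kan extension $i_*$. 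Finally, I would apply \cref{lem:CHII.5.15} to translate monoid sections on $\underline{\Gamma}^{\underline{\times}}$ to inert sections $\func_{/\underline{\finite}_*}^{\inerts}(\underline{\finite}_*, \underline{\sC}^{\underline{\times}}) =: \calg_G(\underline{\sC}^{\underline{\times}})$, using the universal property \cref{eqn:universalPropertyOverlineCartesian} of the construction of $\overline{\underline{\sC}}^{\underline{\times}}$ together with the identification of $\underline{\sC}^{\underline{\times}}$ as a full subcategory thereof from \cref{cons:overlineCartesianSymmetricMonoidalStructure}.

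Composing the three equivalences yields the desired $\cmonoid_G(\underline{\sC}) \simeq \calg_G(\underline{\sC}^{\underline{\times}})$. Since all three equivalences are natural in $\underline{\sC}$ (and compatible with the evident forgetful functors down to $\underline{\func}(\underline{\orbit}^{\mathrm{el}}_{\bullet}, \underline{\sC})$ for the various elementary subcategories), the composite is the canonical comparison one would expect — namely, given $A \in \calg_G(\underline{\sC}^{\underline{\times}})$, the associated $G$-commutative monoid structure on its underlying object is the one whose equivariant addition maps $\prod_{G/H} \res^G_H M \to M$ come from the canonical identification of indexed products with the $G$-cartesian tensor product in $\underline{\sC}^{\underline{\times}}$.

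There is no real obstacle in this final assembly: all the technical content has been front-loaded into the preceding lemmas. The substantive work was in \cref{prop:GCartesianSymmMonCat} (verifying that the subcategory $\underline{\sC}^{\underline{\times}} \subseteq \overline{\underline{\sC}}^{\underline{\times}}$ really is closed under the cocartesian pushforwards and thus defines a $G$-symmetric monoidal structure) and in \cref{lem:CHII.5.14,lem:CHII.5.15} (the parametrised adaptations of Chu--Haugseng's arguments, which required checking coinitiality via \cref{lem:coinitialCharacterisationACtiveLifting} and carefully translating the inert-cocartesian condition on sections into the monoid condition via \cref{rmk:concreteDescriptionCocartesianMorphism}). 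So the proof of this proposition itself is just the one-line concatenation.
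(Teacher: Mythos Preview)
Your proposal is correct and follows essentially the same approach as the paper: the paper's proof is the one-line ``Immediate combination of \cref{lem:CHII.5.14} and \cref{lem:CHII.5.15}, using also that $\calg_G(\underline{\sC}\ttimes)=\func_{/\underline{\finite}_*}\inerts(\underline{\finite}_*,\underline{\sC}\ttimes)$ by definition,'' which is exactly the concatenation you describe (with the identification $\cmonoid_G(\underline{\sC})\simeq\monoid_{\underline{\finite}_*}(\underline{\sC})$ having already been recorded in the remark after \cref{defn:OMonoids}).
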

\begin{proof}
    Immediate combination of \cref{lem:CHII.5.14} and \cref{lem:CHII.5.15}, using also that $\calg_G(\underline{\sC}\ttimes)=\func_{/\underline{\finite}_*}\inerts(\underline{\finite}_*,\underline{\sC}\ttimes)$ by definition.
\end{proof}

\printbibliography
\end{document}